\newtheorem{theorem}{Theorem}
\newtheorem{definition}{Definition}
\newtheorem{lemma}{Lemma}
\title{An upwind Lattice Boltzmann scheme\footnote{Submitted to the editors on November 13, 2021.}}
\date{}
\author{Megala A\footnote{Department of Aerospace Engineering, Indian Institute of Science, Bangalore, India. (email: megalaa@iisc.ac.in, raghu@iisc.ac.in).} \ and \ S.V. Raghurama Rao\footnotemark[2] }
\begin{document}
\maketitle{}

\begin{abstract}
A lattice Boltzmann scheme that is close to pure upwind, low diffusive and entropy satisfying Engquist-Osher scheme has been formulated for hyperbolic scalar conservation laws. A model for source terms, with scalar conservation laws, is introduced to remove the numerical imbalance between convection and source terms.
\end{abstract}

\textbf{Keywords.}
Discrete velocity Boltzmann equation, Lattice Boltzmann method, Flux decomposition, Hyperbolic scalar conservation laws, Source terms. \\

\textbf{AMS subject classifications.}
65M75, 76M28

\section{Introduction}
Lattice Boltzmann Method (LBM) has been established as a popular alternative to traditional Computational Fluid Dynamics (CFD) methods in the last three decades. The popularity of LBM is due to its algorithmic simplicity (convection and collision steps), and further its high accuracy due to zero truncation error in the streaming step. For a review of LBM, the reader is referred to Succi \cite{S2018}. Due to the inherent low Mach number limit, the traditional LBM is restricted to simulation of incompressible flows. Development of lattice Boltzmann methods for hyperbolic conservation laws is currently an active area of research. In this study, we develop an LBM which ensures upwinding at the macroscopic level, for simulating hyperbolic scalar conservation laws. CFD algorithms have reached a high level of maturity and mimicking them can be a good strategy to develop an efficient LBM. Engquist-Osher scheme (\cite{EO}) is a well-known macroscopic scheme that is upwinding, entropy satisfying and has low numerical diffusion. Formulation of an LBM that is close to Engquist-Osher scheme can exploit the advantages of both the schemes. Using flux decomposed equilibrium distribution functions introduced by Aregba-Driollet and Natalini \cite{ADN2000} in one of their discrete kinetic schemes, an LBM that is equivalent to Engquist-Osher scheme upto second order in time has been formulated in this paper. \\ 
Discretisation of source terms in the simulation of hyperbolic conservation laws is non-trivial. When stiff source terms are present, without careful discretisation, discontinuities may appear at wrong locations (see LeVeque and Yee \cite{LY1990}). Source term discretisation has been another active area of research in CFD and various strategies are available. In this paper, the formulated lattice Boltzmann (LB) scheme has been extended to hyperbolic conservation laws with stiff source terms by an appropriate model of source terms at the discrete Boltzmann level, removing the spurious numerical convection terms with the help of Chapman-Enskog expansion of lattice Boltzmann equation.

\section{Formulation of Lattice Boltzmann scheme}
In this section, an LBM has been formulated for hyperbolic scalar conservation laws by using the flux decomposed equilibrium distribution functions introduced by Aregba-Driollet and Natalini \cite{ADN2000}. Chapman-Enskog analysis has been done to obtain modified PDE of the scheme, and an equivalence has been established between new LBM and Engquist-Osher scheme upto second order in time. The total variation boundedness property of the LBM has also been established for one dimensional case. 
\subsection{Flux Decomposition Method}
This is one of the discrete kinetic schemes proposed by Aregba-Driollet and Natalini \cite{ADN2000}. Consider the Cauchy problem, 
 \begin{equation}
 \label{Scalar cons. law}
 \partial_t u + \sum_{d=1}^D \partial_{x_d} g_d (u) = 0 \ \text{with initial condition (IC) as} \ u(\mathbf{x},0)=u_0(\mathbf{x})
 \end{equation}
$u:\mathbb{R}^D \times [0,T] \rightarrow \mathcal{U} \subseteq \mathbb{R}$ is a weak solution to the hyperbolic scalar conservation law, and the flux functions are $g_d(u):\mathcal{U} \rightarrow \mathbb{R}, \ \forall d$. The problem in \cref{Scalar cons. law} is approximated by 
  \begin{equation}
  \label{DVBE}
  \partial_t f^{\epsilon} + \sum_{d=1}^D \Lambda_d \partial_{x_d} f^{\epsilon} = -\frac{1}{\epsilon} \biggl(f^{\epsilon}-f^{eq}(Pf^{\epsilon})\biggr) \ \text{with IC as} \ f^{\epsilon}(\mathbf{x},0)=f_0^{\epsilon}(\mathbf{x})
  \end{equation}
  Here $\epsilon$ is a positive number, $\Lambda_d$ are real diagonal $N \times N$ matrices, $P$ is a real constant coefficients $1 \times N$ matrix, $f^{\epsilon}:\mathbb{R}^D \times [0,T] \rightarrow \mathbb{R}^N$ and $f^{eq}: \mathcal{U} \rightarrow \mathbb{R}^N$. Here $N$ is the number of discrete velocities. $f^{eq}$ satisfies the following relations for all $u \in \mathcal{U}$,
  \begin{equation}
  \label{Moments}
  Pf^{eq}(u)=u \\ \ ; \ P\Lambda_df^{eq}(u)=g_d(u) \  \quad\text{for} \ d \in \{ 1,2,..,D\}
  \end{equation}
  It can be seen that, if $f^{\epsilon}$ converges in some strong topology to a limit $f$ and if $Pf_0^{\epsilon}$ converges to $u_0$, then $Pf$ is a solution of problem \cref{Scalar cons. law}. The approximation in \cref{DVBE} represents a discrete velocity Boltzmann equation with BGK model for \cref{Scalar cons. law}. Numerical schemes for \cref{DVBE} in the limit $\epsilon = 0$ are numerical schemes for \cref{Scalar cons. law}, and these are known as discrete kinetic schemes. \\
  In order to construct the system in \cref{DVBE}, $P, \Lambda_d$ and $f^{eq}$ are required. With $P=\begin{bmatrix} 1 & 1 & . &. &.& 1\end{bmatrix}_{1 \times N}$ and $\Lambda_d=diag\left(v_1^{(d)},v_2^{(d)},...,v_N^{(d)}\right)$ where $v_n^{(d)} \in \mathbb{R}$, the system in \cref{DVBE} can be written as: 
  \begin{equation}
  \label{DVBE_indicial}
  \partial_t f_n^{\epsilon} + \sum_{d=1}^D v_n^{(d)} \partial_{x_d} f_n^{\epsilon} = -\frac{1}{\epsilon} \biggl(f_n^{\epsilon}-f_{n}^{eq}(Pf_n^{\epsilon})\biggr)\ \text{for} \ 1 \leq n \leq N
  \end{equation}
  We have $u^{\epsilon}=\sum_{n=1}^N f_n^{\epsilon} \ \text{and} \ f_n^{\epsilon}(\mathbf{x},t) \in \mathbb{R}$. Taking $N=2D+1$, setting
  \begin{align}
  v_n^{(d)}&= \lambda_d \delta_{nd} ,\quad\text{if} \ n \in \{ 1,2,..,D\}    \label{Discrete velocities_a} \\v_n^{(d)}&= 0  ,\quad\text{if}  \ n=D+1 \label{Discrete velocities_b} \\ v_n^{(d)}&=-\lambda_{n-(D+1)}\delta_{d,n-(D+1)} ,\quad\text{if} \ n \in \{ D+2, .., 2D+1\}  \label{Discrete velocities_c}
  \end{align}
  for some $\lambda_d > 0$, and decomposing the wave speeds $a_d(u)=\partial_u g_d(u)$ into positive and negative parts such that $a_d(u)=a_d(u)^+ - a_d(u)^-$, where
  \begin{equation}
  \label{p wavespeed}
  a_d(u)^+ = \left\{ \begin{matrix}a_d(u) & \mbox{if} & a_d(u) \geq 0 \\ \\ 0 & \mbox{if} & a_d(u)<0 \end{matrix} \right. 
  \end{equation}
  \begin{center} and \end{center}
   \begin{equation}
   \label{n wavespeed}
  a_d(u)^- = \left\{ \begin{matrix}0 & \mbox{if} & a_d(u) \geq 0 \\ \\ -a_d(u) & \mbox{if} & a_d(u)<0 \end{matrix} \right.
  \end{equation}
  the corresponding positive and negative fluxes will become, 
  \begin{equation}
  \label{pm fluxes}
  g_d(u)^{\pm} = \int_0^{u} a_d(u)^{\pm} du 
  \end{equation}
  The associated equilibrium distribution functions are defined as, 
  \begin{align}
f_n^{eq}(u)&= \frac{g_n(u)^+}{\lambda_d} \ ,\quad\text{if} \ n \in \{1,2,...,D \}  \label{Flux decomposed equilibrium distribution functions_a} \\ f_{D+1}^{eq}(u)&=u-\left(\sum_{d=1}^D \frac{g_d(u)^+ + g_d(u)^-}{\lambda_d}\right)  \label{Flux decomposed equilibrium distribution functions_b} \\ f_n^{eq}(u)&=\frac{g_{n-(D+1)}(u)^-}{\lambda_{n-(D+1)}} \ ,\quad\text{if} \ n \in \{D+2,...,2D+1 \}  \label{Flux decomposed equilibrium distribution functions_c}
  \end{align}
 The flux decomposed equilibrium distribution functions in \cref{Flux decomposed equilibrium distribution functions_a,Flux decomposed equilibrium distribution functions_b,Flux decomposed equilibrium distribution functions_c} along with $P$ and $\Lambda_d$ satisfy the moments in \cref{Moments}. In other words, the moments $\sum_{n=1}^N f_n^{eq}(u) = u$ and $\sum_{n=1}^N v_n^{(d)}f_n^{eq}(u)=g_d(u)^+ -  g_d(u)^- = g_d(u)$ are satisfied. In this paper, an LB scheme will be formulated using \cref{Flux decomposed equilibrium distribution functions_a,Flux decomposed equilibrium distribution functions_b,Flux decomposed equilibrium distribution functions_c} and \cref{Discrete velocities_a,Discrete velocities_b,Discrete velocities_c}, and its equivalence with Engquist-Osher scheme upto second order in time will be established. 

 \subsection{Lattice Boltzmann scheme}
Consider the discrete velocity Boltzmann equation in \cref{DVBE_indicial} for any $1 \leq n \leq N$ in the limit $f_n^{\epsilon} \rightarrow f_n$. 
\begin{equation*}
\partial_t f_n + \sum_{d=1}^D v_n^{(d)} \partial_{x_d} f_n= -\frac{1}{\epsilon} \biggl(f_n-f_{n}^{eq}(Pf_n)\biggr)
\end{equation*}
Here $\epsilon$ is the relaxation time, and as $\epsilon \rightarrow 0$, $f_n^{\epsilon} \rightarrow f_n$ in \cref{DVBE_indicial}. Hence $\epsilon \sim O(\mbox{Kn})$, where Kn is the Knudsen number. This equation becomes an ODE along certain characteristic curves, and thereby we have the ODE
\begin{equation}
\frac{d}{dt} f_n = - \frac{1}{\epsilon} \biggl(f_n - f_n^{eq}(Pf_n)\biggr)  \quad\text{along} \quad  \frac{d}{dt} x_d = v_n^{(d)} 
\end{equation}
Forward Euler discretisation of the ODE with time step $\Delta t$ while using $Pf_n=u$ and $\omega=\frac{\Delta t}{\epsilon}$ gives the Lattice Boltzmann equation (LBE), 
\begin{equation}
\label{LBE2}
f_n(\mathbf{x}+\mathbf{v_n}\Delta t, t+\Delta t)=(1-\omega)f_n(\mathbf{x}, t) + \omega f_n^{eq} (u(\mathbf{x}, t))
\end{equation}
The above LBE can be solved by splitting into collision and streaming steps as,
\begin{align}
\mbox{Collision:} \quad f_n^{*}=(1-\omega)f_n(\mathbf{x}, t) + \omega f_n^{eq} (u(\mathbf{x}, t)) \label{collision} \\ \mbox{Streaming:} \quad \quad \quad \quad \quad f_n(\mathbf{x}+\mathbf{v_n}\Delta t, t+\Delta t)=f_n^{*} \label{streaming}
\end{align}
Exact streaming is guaranteed when $\mathbf{v}_n$ is chosen such that \cref{Discrete velocities_a,Discrete velocities_b,Discrete velocities_c} hold and $\lambda_d=\frac{\Delta x_d}{\Delta t}$ for a structured mesh size of $\Delta x_d$ in the direction $d$. For uniform lattice, we have $\lambda_d=\lambda, \ \forall d$. The definition of $f_n^{eq}$ in the LBE \cref{LBE2} follows \cref{Flux decomposed equilibrium distribution functions_a,Flux decomposed equilibrium distribution functions_b,Flux decomposed equilibrium distribution functions_c}. \Cref{alg:LB} can be followed to solve \cref{LBE2} by using the conserved moment $\sum_{n=1}^N f_n = \sum_{n=1}^N f_n^{eq}=u$ to evaluate $u$, and the form of $g_d(u)$ to evaluate $g_d$ at the end of each time step.

\begin{algorithm}
\caption{LB algorithm}\label{alg:LB}
\begin{algorithmic}[1]
\STATE{Evaluate $f_n^{eq}(\mathbf{x},0)$ from $u_0(\mathbf{x})$ using \cref{Flux decomposed equilibrium distribution functions_a,Flux decomposed equilibrium distribution functions_b,Flux decomposed equilibrium distribution functions_c}, for $n \in \{1,2,..,N\}$ and $\forall \mathbf{x}$ in the lattice.}
\STATE{Initialise $f_n(\mathbf{x},0)=f_n^{eq}(\mathbf{x},0)$ for $n \in \{1,2,..,N\}$ and $\forall \mathbf{x}$ in the lattice, and take $t=0$.}
\WHILE{$T-t>10^{-8}$}
\STATE{Carry out Collision step using \cref{collision} for an appropriate choice of $\omega$ dictated by the numerical diffusion co-efficient obtained through  Chapman-Enskog analysis in \cref{sec:CE analysis}.} 
\STATE{Carry out Streaming step using \cref{streaming} for appropriate choice of discrete velocities $\mathbf{v_n}$ such that \cref{Discrete velocities_a,Discrete velocities_b,Discrete velocities_c} hold. For fixed $\lambda_d=\lambda>0$ where $d \in \{ 1,2,..,D\}$ and uniform lattice spacing, $\Delta t = \frac{\Delta x_d}{\lambda_d}=\frac{\Delta x}{\lambda}$ is determined uniquely for any $d$. The choice of $\lambda$ is dictated by the numerical diffusion co-efficient obtained through  Chapman-Enskog analysis in \cref{sec:CE analysis}.}
\STATE{Use appropriate boundary conditions for $f_n$.}
\STATE{Evaluate $u(\mathbf{x},t+\Delta t)$ using $\sum_{n=1}^{N} f_n=u$.}
\STATE{Evaluate $g_d(u(\mathbf{x},t+\Delta t))$ using $u(\mathbf{x},t+\Delta t)$.}
\STATE{Evaluate $f_n^{eq}(\mathbf{x},t+\Delta t)$ using \cref{Flux decomposed equilibrium distribution functions_a,Flux decomposed equilibrium distribution functions_b,Flux decomposed equilibrium distribution functions_c} for $n \in \{1,2,..,N\}$.}\STATE{Update $t=t+\Delta t$}
\ENDWHILE
\RETURN $u$
\end{algorithmic}
\end{algorithm}

\subsection{Chapman-Enskog analysis of the LB scheme}
\label{sec:CE analysis}
Taylor expanding the LBE in \cref{LBE2} and simplifying, we get
\begin{multline}
\label{TE2}
\left( \partial_t + \sum_{d=1}^D v_n^{(d)} \partial_{x_d}\right) f_n =  - \frac{1}{\epsilon} \left(f_n-f_n^{eq}\right) \\ + \frac{\Delta t}{2\epsilon} \left( \partial_t + \sum_{d=1}^D v_n^{(d)} \partial_{x_d}\right) (f_n-f_n^{eq}) + O(\Delta t^2)
\end{multline}
Consider the perturbation expansion of $f_n$,
\begin{equation}
\label{PE}
f_n=f_n^{eq} + f_n^{neq} \ \mbox{where} \ f_n^{neq}=\xi f_n^{(1)} + \xi^2 f_n^{(2)} + . . .
\end{equation}
Here $\xi \sim O(\mbox{Kn)}$. Since BGK collision operator is invariant under the conserved moment, we have $\sum_{n=1}^N f_n = \sum_{n=1}^N f_n^{eq} = u$, and the moment of non-equilibrium distribution function becomes $\sum_{n=1}^N \left(\xi f_n^{(1)} + \xi^2 f_n^{(2)}+ . . \right)=0$. Each term corresponding to different order of $\xi$ in this moment expression must individually be zero. Hence, we have $\sum_{n=1}^N f_n^{(i)} = 0 \ \mbox{for} \ i \in \mathbb{N}$. Multiple scale expansion of derivatives of $f_n$ gives $\partial_t f_n = \left( \xi \partial_t^{(1)} + \xi^2 \partial_t^{(2)} + ... \right) f_n \ \text{and} \ v_n^{(d)} \partial_{x_d} f_n = \xi v_n^{(d)} \partial_{x_d}^{(1)} f_n$. \\
Using perturbation expansion of $f_n$ and multiple scale expansion of derivatives of $f_n$ in \cref{TE2} and separating out $O(\xi)$ and $O(\xi^2)$ terms,
 \begin{align}
 O(\xi)&: \left( \partial_t^{(1)} + \sum_{d=1}^D v_n^{(d)} \partial_{x_d}^{(1)}\right) f_n^{eq} = - \frac{1}{\epsilon} f_n^{(1)}  \label{O(xi)} \\
 O(\xi^2)&: \ \partial_t^{(2)} f_n^{eq} + \left( 1- \frac{\Delta t}{2\epsilon}\right)\left( \partial_t^{(1)} + \sum_{d=1}^D v_n^{(d)} \partial_{x_d}^{(1)}\right) f_n^{(1)} =  - \frac{1}{\epsilon} f_n^{(2)}  \label{O(xi^2)}
 \end{align}
Zeroth moment of $O(\xi)$ terms in \cref{O(xi)} and $O(\xi^2)$ terms in \cref{O(xi^2)} respectively give,
 \begin{align}
 \partial_t^{(1)} u + \sum_{d=1}^D \partial_{x_d}^{(1)} g_d(u) = 0 \label{0thO(xi)} \\
 \partial_t^{(2)} u + \left( 1- \frac{\Delta t}{2\epsilon}\right) \sum_{d=1}^D \partial_{x_d}^{(1)} \left(\sum_{n=1}^N v_n^{(d)}  f_n^{(1)}\right)=0  \label{0thO(xi^2)}
 \end{align}
From the first moment of $O(\xi)$ terms in \cref{O(xi)}, we get 
\begin{equation}
\label{1stO(xi)}
\sum_{n=1}^N v_n^{(d)} f_n^{(1)} = - \epsilon \left( \partial_u g_d \left( -\sum_{i=1}^D \partial_u g_i \partial_{x_i}^{(1)} u \right) + \sum_{i=1}^D \partial_{x_i}^{(1)} \left( \sum_{n=1}^N v_n^{(d)} v_n^{(i)} f_n^{eq} \right)\right)
\end{equation}
Recombining the zeroth moment equations of $O(\xi)$ in \cref{0thO(xi)} and $O(\xi^2)$ in \cref{0thO(xi^2)}, further reversing the multiple scale expansions, substituting \cref{1stO(xi)} and using $\epsilon \left( 1- \frac{\Delta t}{2\epsilon}\right) = \Delta t \left( \frac{1}{\omega}-\frac{1}{2}\right)$, we get 
\begin{multline}
\label{mPDE2}
\partial_t u + \sum_{d=1}^D \partial_{x_d} g_d(u) = \\ \Delta t \left( \frac{1}{\omega}-\frac{1}{2}\right) \sum_{d=1}^D \partial_{x_d} \left(  \sum_{i=1}^D \partial_{x_i} \left( \sum_{n=1}^N v_n^{(d)} v_n^{(i)} f_n^{eq} \right) - \partial_u g_d \left( \sum_{i=1}^D \partial_u g_i \partial_{x_i} u \right)\right)
\end{multline}
\Cref{mPDE2} is correct upto $O(\Delta t^2)$, because the analysis is carried out on \cref{TE2} which is correct upto $O(\Delta t^2)$. It can be seen from \cref{mPDE2} that the LB scheme solves an $O(\Delta t)$ approximation of the scalar conservation law. However, empirical data (\cref{tab:EOC}, \cref{EOC}) for the numerical solution of one dimensional inviscid Burgers' equation with an initial sine wave profile shows that the order of accuracy can be close to, and sometimes even more than, two. 
\begin{table}[tbhp]
\begin{center}
\begin{tabular}{|m{2cm}|m{2cm}|m{2cm}|m{2cm}|}
\hline
\centering Number of lattice points, N & \centering Lattice spacing, h & \centering $L_2$ norm & EOC using $L_2$ norm \\
\hline
\centering 40 & \centering 0.025 & \centering 0.00979288 &  - \\
\centering 80 & \centering 0.0125 & \centering 0.00327174 &  1.581675 \\
\centering 160 & \centering 0.00625 & \centering 0.000895927 &  1.868605 \\
\centering 320 & \centering 0.003125 & \centering 0.000132689 &  2.755337 \\
\hline
\end{tabular}
\caption{ Empirical data on Experimental Order of Convergence for 1D Inviscid Burgers' equation with initial sine wave profile}
\label{tab:EOC}
\end{center}
\end{table}
\begin{figure}[h]
\centering
\includegraphics[scale=0.6]{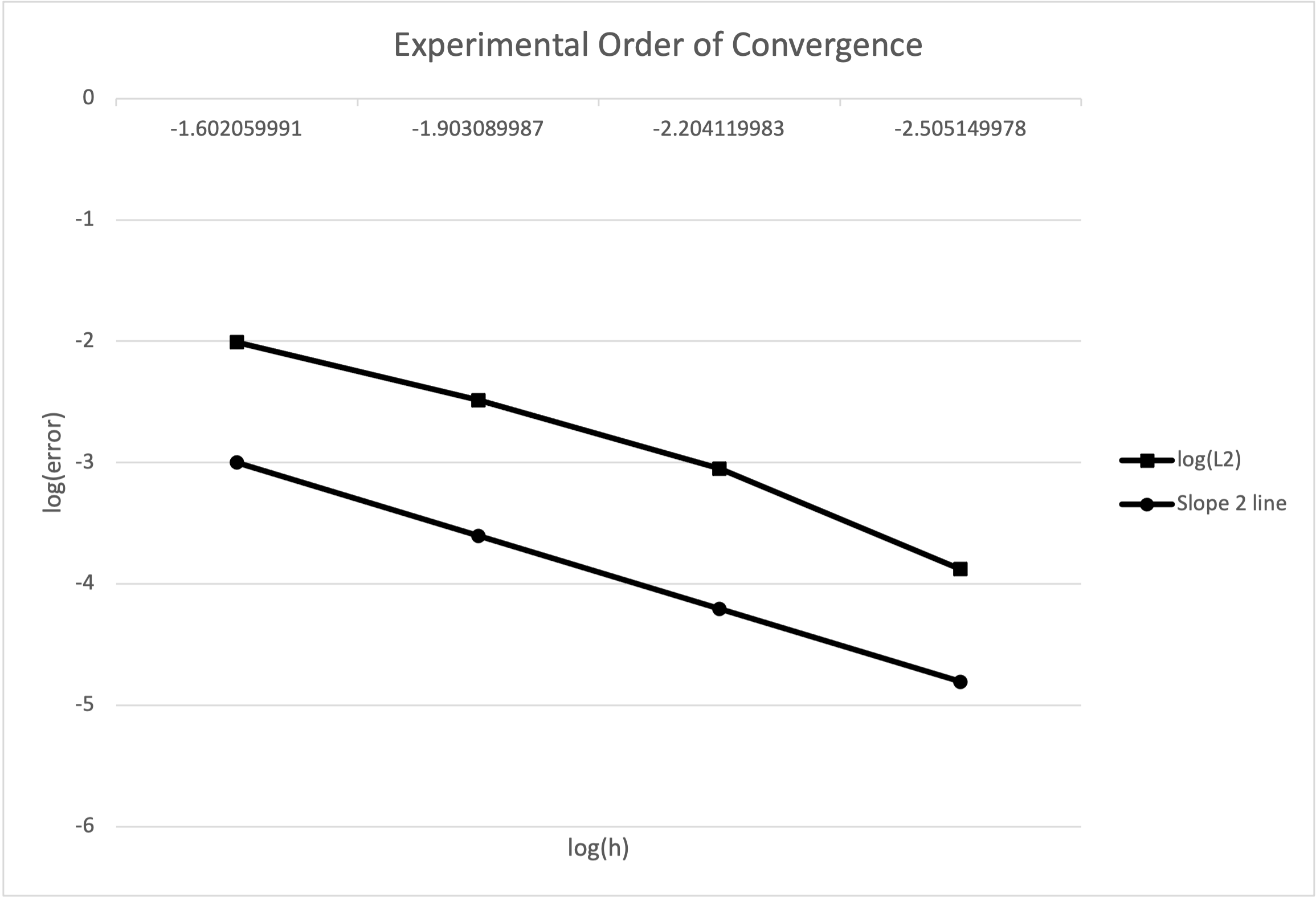}
\caption{ Comparison of EOC using L2 norm with Slope 2 line for 1D inviscid Burgers' equation with initial sine wave profile}
\label{EOC}
\end{figure}\\
For one dimensional scalar conservation laws, the modified PDE in \cref{mPDE2} becomes,
\begin{equation}
\partial_t u +  \partial_{x_1} g_1(u) = \Delta t \left( \frac{1}{\omega}-\frac{1}{2}\right) \partial_{x_1} \biggl(\biggl( \lambda_1-\partial_u |g_1|\biggr)\partial_u |g_1|\partial_{x_1}u \biggr)
\end{equation}
It can be seen that numerical diffusion coefficient is non-negative when $0<\omega<2$ and $\lambda_1 \geq \sup_{u\in \mathcal{U}} \partial_u |g_1|$.   For two dimensional scalar conservation laws on uniform lattice, the modified PDE in \cref{mPDE2} becomes,
\begin{multline}
\partial_t u +  \partial_{x_1} g_1(u) + \partial_{x_2} g_2(u) = \\  \Delta t \left( \frac{1}{\omega}-\frac{1}{2}\right)   \partial_{x_1} \biggl(\biggl(\lambda-\partial_u |g_1|\biggr)\partial_u |g_1|\partial_{x_1}u - \partial_u g_1 \partial_u g_2\partial_{x_2}u\biggr)  \\  + \Delta t \left( \frac{1}{\omega}-\frac{1}{2}\right) \partial_{x_2} \biggl( - \partial_u g_2 \partial_u g_1\partial_{x_1}u + \biggl(\lambda-\partial_u |g_2|\biggr)\partial_u |g_2|\partial_{x_2}u\biggr)
\end{multline}
Numerical diffusion co-efficient is non-negative when both $0<\omega<2$ and positive semi-definiteness of the matrix
\begin{equation}
\begin{bmatrix}
\biggl(\lambda-\partial_u |g_1|\biggr)\partial_u |g_1| & - \partial_u g_1 \partial_u g_2 \\ - \partial_u g_2 \partial_u g_1 & \biggl(\lambda-\partial_u |g_2|\biggr)\partial_u |g_2|
\end{bmatrix}
\end{equation}  
through an appropriate choice of $\lambda$ are ensured.  

\subsection{Equivalence of LB scheme with Engquist-Osher scheme} 
In this subsection, an equivalence between formulated LB scheme and Engquist-Osher scheme has been established upto $O(\Delta t^2)$.
\begin{theorem}\label{thm1}
LB scheme (given by the LBE in \cref{LBE2}) satisfying the definition of conserved moment $\sum_{n=1}^N f_n = \sum_{n=1}^N f_n^{eq} = u$ along with the form of discrete velocities in \cref{Discrete velocities_a,Discrete velocities_b,Discrete velocities_c}, flux decomposed equilibrium distribution functions in \cref{Flux decomposed equilibrium distribution functions_a,Flux decomposed equilibrium distribution functions_b,Flux decomposed equilibrium distribution functions_c} and perturbation expansion in \cref{PE} corresponds to the macroscopic scheme, 
\begin{multline}
u\left(x_1,..,x_d,..x_D, t+\Delta t\right) = u\left(x_1,..,x_d,..x_D, t\right) \\
- \sum_{d=1}^D \frac{\Delta t}{\Delta x_d} \left(g_d^{+}\left(x_1,..,x_d,..,x_D,t\right) - g_d^{+}\left(x_1,..,x_{d-1}, x_d-\Delta x_d, x_{d+1},..,x_D,t\right) \right) \\ 
+ \sum_{d=1}^D \frac{\Delta t}{\Delta x_d} \left(g_d^{-}\left(x_1,..,x_{d-1}, x_d+ \Delta x_d, x_{d+1},..,x_D,t\right) - g_d^{-}\left(x_1,..,x_d,..,x_D,t\right) \right)
\end{multline}
upto $O(\Delta t^2)$.  
\end{theorem}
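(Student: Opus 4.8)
The plan is to take the conserved zeroth moment of a single LB update, split $f_n$ into its equilibrium and non-equilibrium parts, and show that the equilibrium part reproduces the Engquist--Osher update \emph{exactly} while the non-equilibrium part is a remainder of size $O(\Delta t^2)$.

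First I would invert the streaming step \cref{streaming} as $f_n(\mathbf{x},t+\Delta t)=f_n^{*}(\mathbf{x}-\mathbf{v}_n\Delta t,t)$, insert the collision rule \cref{collision}, and sum over $n$ using $\sum_{n=1}^N f_n = u$. Writing $f_n=f_n^{eq}+f_n^{neq}$ as in \cref{PE}, the factors $(1-\omega)$ and $\omega$ multiplying the two equilibrium contributions recombine, leaving
\begin{equation*}
u(\mathbf{x},t+\Delta t)=\sum_{n=1}^N f_n^{eq}\bigl(\mathbf{x}-\mathbf{v}_n\Delta t,t\bigr)+(1-\omega)\sum_{n=1}^N f_n^{neq}\bigl(\mathbf{x}-\mathbf{v}_n\Delta t,t\bigr).
\end{equation*}
The theorem then amounts to identifying the first sum with the Engquist--Osher update and bounding the second.

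Next I would evaluate the equilibrium sum group by group using \cref{Discrete velocities_a,Discrete velocities_b,Discrete velocities_c} and \cref{Flux decomposed equilibrium distribution functions_a,Flux decomposed equilibrium distribution functions_b,Flux decomposed equilibrium distribution functions_c}. Since $\lambda_d=\Delta x_d/\Delta t$, the shift $\mathbf{x}-\mathbf{v}_n\Delta t$ is a one-cell backward step in direction $d$ for $n=d\in\{1,\dots,D\}$ carrying $g_d^{+}/\lambda_d$, no shift for $n=D+1$ carrying $u-\sum_{d}(g_d^{+}+g_d^{-})/\lambda_d$, and a one-cell forward step in direction $d$ for $n=D+1+d$ carrying $g_d^{-}/\lambda_d$. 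Collecting the three groups, the unshifted $u$ survives and, after using $1/\lambda_d=\Delta t/\Delta x_d$, the $g_d^{+}$ terms combine into a backward difference and the $g_d^{-}$ terms into a forward difference; this is precisely the claimed Engquist--Osher update, with no truncation error incurred in this step.

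Finally I would show the remainder $(1-\omega)\sum_n f_n^{neq}(\mathbf{x}-\mathbf{v}_n\Delta t,t)=O(\Delta t^2)$. Taylor expanding each $f_n^{neq}$ about $\mathbf{x}$ gives $\sum_n f_n^{neq}(\mathbf{x})-\Delta t\sum_{d}\partial_{x_d}\bigl(\sum_n v_n^{(d)}f_n^{neq}\bigr)+O(\Delta t^2)$. The leading term vanishes because $\sum_n f_n^{neq}=\sum_n f_n-\sum_n f_n^{eq}=0$, and the first-moment relation \cref{1stO(xi)} together with $\epsilon=\Delta t/\omega$ shows $\sum_n v_n^{(d)}f_n^{neq}=O(\Delta t)$, so the surviving term is $O(\Delta t^2)$. (In the special case $\omega=1$ the prefactor kills the remainder and the LB scheme coincides with Engquist--Osher exactly.) The main obstacle is this last order estimate: making $\sum_n v_n^{(d)}f_n^{neq}=O(\Delta t)$ precise forces a commitment to the scaling $\epsilon\sim\Delta t$ (equivalently $\omega=O(1)$), and one must also recall that \cref{1stO(xi)} is extracted from \cref{TE2}, itself accurate only to $O(\Delta t^2)$, so care is needed to confirm that this accuracy still suffices once the moment is multiplied by the extra $\Delta t$ from the streaming shift.
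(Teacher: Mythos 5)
Your proposal is correct and follows the same skeleton as the paper's proof: rewrite the LBE in backward-shifted form (the paper's \cref{LBE3}), split $f_n=f_n^{eq}+f_n^{neq}$ via \cref{PE}, sum over $n$ to obtain exactly the paper's \cref{closeness1}, and identify the shifted equilibrium sum with the Engquist--Osher flux differences exactly as in \cref{M2} (this step is indeed exact, since $\lambda_d\Delta t=\Delta x_d$ maps lattice points to lattice points). Where you genuinely differ is in bounding the non-equilibrium remainder. The paper, in \cref{M3}, uses a pointwise estimate: each $f_n^{neq}$ is $O(\epsilon^2)$ because $f_n^{(1)}\sim O(\epsilon)$ by \cref{O(xi)} and $\xi\sim O(\epsilon)$, and then $\epsilon=\Delta t/\omega$ with $\omega$ chosen in $(1,2)$ gives $(1-\omega)\sum_n f_n^{neq}\sim O(\Delta t^2)$. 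You instead Taylor expand the shifted sum and exploit an exact cancellation: $\sum_n f_n^{neq}=0$ identically by the conserved-moment condition, so only the first-moment term $-\Delta t\sum_d \partial_{x_d}\bigl(\sum_n v_n^{(d)} f_n^{neq}\bigr)$ survives, and \cref{1stO(xi)} together with $\epsilon=\Delta t/\omega$ makes it $O(\Delta t)\cdot\Delta t=O(\Delta t^2)$. Your route needs one less power of smallness per distribution function (only the first moment of $f^{neq}$ must be $O(\Delta t)$, not each $f_n^{neq}$ itself $O(\Delta t^2)$), and it yields the clean corollary, absent from the paper, that for $\omega=1$ the LB scheme reproduces Engquist--Osher exactly; the price is that the Taylor step requires spatial smoothness of $f_n^{neq}$, an assumption the paper's cruder pointwise bound avoids making explicit. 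Both arguments ultimately rest on the same Chapman--Enskog scaling commitment $\epsilon\sim\Delta t$ (equivalently $\omega=O(1)$), which you correctly identify as the crux, and your closing caveat about the accuracy of \cref{1stO(xi)} is handled the same way in the paper: the extra factor of $\Delta t$ absorbs the $O(\Delta t^2)$ truncation of \cref{TE2}.
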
 
\begin{proof}
Let's rewrite the LBE in \cref{LBE2} as,
\begin{multline}
\label{LBE3}
f_n(x_1,..,x_d,..x_D, t+\Delta t) = \\
(1-\omega)f_n\left(x_1-v_n^{(1)}\Delta t,..,x_d-v_n^{(d)}\Delta t,..,x_D-v_n^{(D)}\Delta t, t\right) \\
+ \omega f_n^{eq}\left(u\left(x_1-v_n^{(1)}\Delta t,..,x_d-v_n^{(d)}\Delta t,..,x_D-v_n^{(D)}\Delta t, t\right)\right)
\end{multline}
Substituting the perturbation expansion $f_n=f_n^{eq}+f_n^{neq}$ on RHS of the re-written form of LBE in \cref{LBE3} and summing over $n$, we get
\begin{multline}
\label{closeness1}
\sum_{n=1}^N f_n(x_1,..,x_d,..x_D, t+\Delta t) = \\  \sum_{n=1}^N f_n^{eq}\left(u\left(x_1-v_n^{(1)}\Delta t,..,x_d-v_n^{(d)}\Delta t,..,x_D-v_n^{(D)}\Delta t, t\right)\right) \\  + (1-\omega)\sum_{n=1}^N f_n^{neq}\left(u\left(x_1-v_n^{(1)}\Delta t,..,x_d-v_n^{(d)}\Delta t,..,x_D-v_n^{(D)}\Delta t, t\right)\right)
\end{multline}
From the definition of conserved moment $\sum_{n=1}^N f_n = \sum_{n=1}^N f_n^{eq} = u$, we have
\begin{equation}
\label{M1}
\sum_{n=1}^N f_n(x_1,..,x_d,..x_D, t+\Delta t) = u(x_1,..,x_d,..x_D, t+\Delta t) 
\end{equation}
From the definition of discrete velocities in \cref{Discrete velocities_a,Discrete velocities_b,Discrete velocities_c} and flux decomposed equilibrium distribution functions in \cref{Flux decomposed equilibrium distribution functions_a,Flux decomposed equilibrium distribution functions_b,Flux decomposed equilibrium distribution functions_c}, we get
\begin{multline}
\label{M2}
\sum_{n=1}^N f_n^{eq}\left(u\left(x_1-v_n^{(1)}\Delta t,..,x_d-v_n^{(d)}\Delta t,..,x_D-v_n^{(D)}\Delta t, t\right)\right) = \\  \sum_{d=1}^D \frac{1}{\lambda_d} \left( g_d^{+}(x_1,..,x_{d-1}, x_d-\lambda_d \Delta t, x_{d+1},..,x_D,t)\right) + \\ u(x_1,..,x_d,..x_D, t) - \sum_{d=1}^D \frac{1}{\lambda_d} \left( g_d^{+}(x_1,..,x_d,..,x_D,t) + g_d^{-}(x_1,..,x_d,..,x_D,t)\right)   \\ +\sum_{d=1}^D \frac{1}{\lambda_d} \left( g_d^{-}(x_1,..,x_{d-1}, x_d+\lambda_d \Delta t, x_{d+1},..,x_D,t)\right)
\end{multline}
From \cref{O(xi)} we have, $f_n^{(1)} = -\epsilon \left( \partial_t^{(1)} + \sum_{d=1}^D v_n^{(d)} \partial_{x_d}^{(1)}\right) f_n^{eq} \sim O(\epsilon)$.
Since $f_n^{neq} = \xi f_n^{(1)} + \xi^2 f_n^{(2)}+... \sim O(\xi f_n^{(1)})$, we have $f_n^{neq} \sim O(\xi \epsilon)$. Since $O(\xi) \sim O(\text{Kn}) \sim O(\epsilon)$, we have $f_n^{neq} \sim O(\epsilon^2)$. Since $0<\omega<2$, we can choose $\omega$ between $1 \ \text{and} \ 2$ and thereby we have, $\frac{1}{\omega} \sim O(1)$. Using these, we get
\begin{multline}
\label{M3}
(1-\omega)\sum_{n=1}^N f_n^{neq}\left(u\left(x_1-v_n^{(1)}\Delta t,..,x_d-v_n^{(d)}\Delta t,..,x_D-v_n^{(D)}\Delta t, t\right)\right)  \\ \sim (1-\omega) O(\epsilon^2) \sim (1-\omega)\epsilon^2 O(1) \sim \Delta t^2 \frac{1}{\omega} \left(\frac{1}{\omega}-1\right) O(1) \quad \left(\because \ \epsilon=\frac{\Delta t}{\omega} \right)\\ \sim \Delta t^2 O(1) \sim O(\Delta t^2)
\end{multline}
Using \cref{M1}, \cref{M2}, \cref{M3} and $\lambda_d=\frac{\Delta x_d}{\Delta t}$ in \cref{closeness1}, we get the macroscopic scheme 
\begin{multline}
\label{macro}
u\left(x_1,..,x_d,..x_D, t+\Delta t\right) = u\left(x_1,..,x_d,..x_D, t\right) \\
- \sum_{d=1}^D \frac{\Delta t}{\Delta x_d} \left(g_d^{+}\left(x_1,..,x_d,..,x_D,t\right) - g_d^{+}\left(x_1,..,x_{d-1}, x_d- \Delta x_d, x_{d+1},..,x_D,t\right) \right) \\ 
+ \sum_{d=1}^D \frac{\Delta t}{\Delta x_d} \left(g_d^{-}\left(x_1,..,x_{d-1}, x_d+ \Delta x_d, x_{d+1},..,x_D,t\right) - g_d^{-}\left(x_1,..,x_d,..,x_D,t\right) \right) \\
+ O(\Delta t^2)
\end{multline}
\end{proof}

\subsection{TV boundedness of numerical solution}
Boundedness of total variation of numerical solution from the formulated upwind LB scheme for one dimensional scalar conservation laws is studied here. So, the discrete velocities in \cref{Discrete velocities_a,Discrete velocities_b,Discrete velocities_c} and flux decomposed equilibrium distribution functions in \cref{Flux decomposed equilibrium distribution functions_a,Flux decomposed equilibrium distribution functions_b,Flux decomposed equilibrium distribution functions_c} corresponding to $D=1$, along with the LBE in \cref{LBE2} will be used to understand the property. For convenience, let's drop the dimensional index and denote $x_1 \ \mbox{as} \ x$, $g_1 \ \mbox{as} \ g$ and $\lambda_1 \ \mbox{as} \ \lambda$ since we are dealing with one dimensional scalar conservation laws. Let's also denote $u(x,t) \ \mbox{as} \ u_i^m$ and $f_n(x,t) \ \mbox{as} \ f_{n_i}^m$ on the lattice structure. Subscript $i$ and superscript $m$ index spatial variable $x$ and temporal variable $t$ respectively.  From the LBE in \cref{LBE2}, we have the following update equations for distribution functions on the lattice structure.
\begin{align}
f_{1_i}^{m+1} &= (1-\omega) f_{1_{i-1}}^m + \omega f_{1_{i-1}}^{eq^m} \label{D1Q3LBE_a}\\  
f_{2_i}^{m+1} &= (1-\omega) f_{2_{i}}^m + \omega f_{2_{i}}^{eq^m} \label{D1Q3LBE_b}\\ 
f_{3_i}^{m+1} &= (1-\omega) f_{3_{i+1}}^m + \omega f_{3_{i+1}}^{eq^m} \label{D1Q3LBE_c}
\end{align}
\begin{definition}
The total variation of any variable $q$ defined on a lattice structure indexed by $i$ is given by,
\begin{equation*}
\mathbf{TV}(q) = \sum_i \left| q_{i+1}-q_i\right|
\end{equation*}
\end{definition}
Let's now show that the total variation of numerical solution $u^m$ obtained using the formulated LB scheme on lattice structure indexed by $i$ remains bounded at any time $t=0+m\Delta t$.  
\begin{lemma}
\label{lem1}
If $u_i^m \in \mathcal{U}^m \ \forall \ i : x \pm i\Delta x  \in \Omega $ and $\lambda \geq \sup_{\zeta \ \in \  \mathcal{U}^m} \left| g'(\zeta) \right|$, then 
\begin{align}
\mathbf{TV}(f_{1}^{eq^m}) &\leq  \mathbf{TV}(u^m) \label{Lem1_a}\\ 
\mathbf{TV}(f_{2}^{eq^m}) &\leq 2 \ \mathbf{TV}(u^m) \label{Lem1_b}\\ 
\mathbf{TV}(f_{3}^{eq^m}) &\leq  \mathbf{TV}(u^m) \label{Lem1_c}
\end{align}
using the definition of flux decomposed equilibrium distribution functions in \cref{Flux decomposed equilibrium distribution functions_a,Flux decomposed equilibrium distribution functions_b,Flux decomposed equilibrium distribution functions_c} corresponding to $D=1$.  
\end{lemma}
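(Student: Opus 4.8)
The plan is to bound $\mathbf{TV}(f_n^{eq^m})$ for each $n$ directly from the definition $\mathbf{TV}(q)=\sum_i|q_{i+1}-q_i|$, estimating every adjacent difference $|f_{n_{i+1}}^{eq^m}-f_{n_i}^{eq^m}|$ by a fixed multiple of $|u_{i+1}^m-u_i^m|$ and then summing over $i$. The common tool is the integral representation $g^{\pm}(u)=\int_0^u a^{\pm}(s)\,ds$ from \cref{pm fluxes}, together with the pointwise bounds $0\le a^{\pm}(s)\le|g'(s)|$ that follow immediately from \cref{p wavespeed,n wavespeed}, and the hypothesis $\lambda\ge\sup_{\zeta\in\mathcal{U}^m}|g'(\zeta)|$.

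First I would treat $n=1$ and $n=3$ together. Using the $D=1$ forms $f_1^{eq}=g^+/\lambda$ and $f_3^{eq}=g^-/\lambda$ from \cref{Flux decomposed equilibrium distribution functions_a,Flux decomposed equilibrium distribution functions_c}, the fundamental theorem of calculus gives $f_{1_{i+1}}^{eq^m}-f_{1_i}^{eq^m}=\frac{1}{\lambda}\int_{u_i^m}^{u_{i+1}^m}a^+(s)\,ds$, and likewise for $f_3^{eq}$ with $a^-$. Since $0\le a^{\pm}(s)\le\lambda$ on the relevant range, this yields $|f_{1_{i+1}}^{eq^m}-f_{1_i}^{eq^m}|\le|u_{i+1}^m-u_i^m|$ and the same for index $3$; summing over $i$ produces \cref{Lem1_a,Lem1_c}.

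Next, for $n=2$ I would use $f_2^{eq}=u-(g^++g^-)/\lambda$ from \cref{Flux decomposed equilibrium distribution functions_b} together with the identity $a^++a^-=|g'|$, so that $g^++g^-=\int_0^u|g'(s)|\,ds$. The triangle inequality splits the adjacent difference into a transport part $|u_{i+1}^m-u_i^m|$ and a flux part $\frac{1}{\lambda}\left|\int_{u_i^m}^{u_{i+1}^m}|g'(s)|\,ds\right|\le|u_{i+1}^m-u_i^m|$, whose sum is $2|u_{i+1}^m-u_i^m|$; summation then gives \cref{Lem1_b}, the factor of two arising precisely from combining these two contributions.

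The estimates are elementary once the integral representation is in place; the only point requiring care — and the main obstacle — is justifying $a^{\pm}(s)\le\lambda$ for every intermediate value $s$ on the segment between $u_i^m$ and $u_{i+1}^m$, not merely at the endpoints. This requires reading $\mathcal{U}^m$ as the convex range of the discrete solution at level $m$, so that $[u_i^m,u_{i+1}^m]\subseteq\mathcal{U}^m$ and the supremum in the hypothesis controls $|g'|$ along the whole path of integration. Once this interpretation is fixed, the three bounds follow immediately.
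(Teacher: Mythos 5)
Your proposal is correct and follows essentially the same route as the paper: bound each adjacent difference $\bigl|f_{n_{i+1}}^{eq^m}-f_{n_i}^{eq^m}\bigr|$ by a multiple of $\bigl|u_{i+1}^m-u_i^m\bigr|$ using the fact that $g^{\pm}$ is Lipschitz with constant at most $\lambda$, then sum over $i$; the paper phrases this as a mean-value/Lipschitz estimate with $\sup_{\zeta\in\mathcal{U}^m}|g'(\zeta)|$, while you reach the same estimate via the integral representation of \cref{pm fluxes}, and your $n=2$ argument (triangle inequality splitting the $u$ part and the $(g^++g^-)/\lambda$ part) is exactly the ``similar algebra'' the paper leaves implicit. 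Your explicit remark that $\mathcal{U}^m$ must contain the whole segment between adjacent lattice values is a point the paper's proof also relies on tacitly, so it is a clarification rather than a deviation.
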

\begin{proof}
\begin{equation*}
\mathbf{TV}(f_{n}^{eq^m}) = \sum_{i} \left| f_{n_{i+1}}^{eq^m} - f_{n_i}^{eq^m} \right|
\end{equation*}
For $n=1$,
\begin{subequations}
\begin{align*}
\mathbf{TV}(f_{1}^{eq^m}) &=  \sum_{i} \left| f_{1_{i+1}}^{eq^m} - f_{1_i}^{eq^m} \right|  \\ 
&=  \frac{1}{\lambda} \sum_{i} \left| g_{i+1}^{+^m} - g_{i}^{+^m}\right| \\ 
&\leq  \frac{1}{\lambda} \sup\limits_{\zeta \ \in \  \mathcal{U}^m} \left| g'(\zeta) \right| \sum_{i} \left| u_{i+1}^{m} - u_{i}^{m}\right|  \\
&\leq  \sum_{i} \left| u_{i+1}^{m} - u_{i}^{m}\right|  \\
&=  \mathbf{TV}(u^m) 
\end{align*}
\end{subequations} 
Similar algebra for $n=2$ and $n=3$ gives, respectively, $\mathbf{TV}(f_{2}^{eq^m}) \leq 2 \ \mathbf{TV}(u^m)$ and $\mathbf{TV}(f_{3}^{eq^m}) \leq  \mathbf{TV}(u^m)$. 
\end{proof}
In general, we have
\begin{equation}
\label{LEM1}
\mathbf{TV}(f_{n}^{eq^m}) \leq 2 \ \mathbf{TV}(u^m) \ \mbox{for} \ n \in \{1,2,3 \}
\end{equation}

\begin{lemma}
\label{lem2}
For the update formulae of distribution functions given by \cref{D1Q3LBE_a,D1Q3LBE_b,D1Q3LBE_c}, 
\begin{equation}
\label{Lem2}
\mathbf{TV}(f_n^m)\leq \left|1-\omega \right|\mathbf{TV}(f_n^{m-1}) + \omega \ \mathbf{TV}(f_n^{eq^{m-1}}) \ ; \ n \in \{ 1,2,3\}
\end{equation}
\end{lemma}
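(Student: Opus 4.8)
The plan is to exploit the linear, rigidly-shifted structure of the update formulae in \cref{D1Q3LBE_a,D1Q3LBE_b,D1Q3LBE_c}. Each formula writes the post-update value $f_{n_i}^{m}$ as the combination $(1-\omega)f_{n_j}^{m-1} + \omega f_{n_j}^{eq^{m-1}}$ evaluated at a single shifted index $j$, with $j=i-1$ for $n=1$, $j=i$ for $n=2$, and $j=i+1$ for $n=3$. The key observation is that the same shift is applied uniformly at every lattice site, so both the updated distribution and its equilibrium source are rigid spatial translates of sequences from the previous time level.

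First I would form the consecutive difference $f_{n_{i+1}}^{m} - f_{n_i}^{m}$. Taking $n=1$ as the representative case, substituting \cref{D1Q3LBE_a} (with $m$ in place of $m+1$) gives
\[
f_{1_{i+1}}^{m} - f_{1_i}^{m} = (1-\omega)\left(f_{1_i}^{m-1} - f_{1_{i-1}}^{m-1}\right) + \omega\left(f_{1_i}^{eq^{m-1}} - f_{1_{i-1}}^{eq^{m-1}}\right).
\]
Applying the triangle inequality termwise and summing $\left|f_{1_{i+1}}^{m} - f_{1_i}^{m}\right|$ over all $i$ then yields
\[
\mathbf{TV}(f_1^m) \leq |1-\omega|\sum_i \left|f_{1_i}^{m-1} - f_{1_{i-1}}^{m-1}\right| + \omega\sum_i \left|f_{1_i}^{eq^{m-1}} - f_{1_{i-1}}^{eq^{m-1}}\right|.
\]

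The second ingredient is to recognise each sum on the right as a total variation. Since the summation runs over the whole lattice, re-indexing $i \mapsto i+1$ shows $\sum_i \left|f_{1_i}^{m-1} - f_{1_{i-1}}^{m-1}\right| = \sum_i \left|f_{1_{i+1}}^{m-1} - f_{1_i}^{m-1}\right| = \mathbf{TV}(f_1^{m-1})$, and likewise the equilibrium sum equals $\mathbf{TV}(f_1^{eq^{m-1}})$; in other words, $\mathbf{TV}$ is invariant under the rigid shift carried out in the streaming step. This delivers \cref{Lem2} for $n=1$, and the cases $n=2$ (no shift) and $n=3$ (shift by $+1$) follow by identical algebra, since only the value of $j$ changes while the coefficients $|1-\omega|$ and $\omega$ are unaffected.

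The argument is essentially elementary, so the only point demanding care — and hence the main obstacle — is the shift-invariance step, which presumes either an unbounded lattice or boundary data for which the boundary contributions to the re-indexed sum cancel (for example periodic data, or numerical solutions whose variation is compactly supported). I would state this hypothesis explicitly so that the telescoping of the boundary terms is justified; away from that bookkeeping, the triangle inequality together with the nonnegativity of $|1-\omega|$ and $\omega$ closes the estimate.
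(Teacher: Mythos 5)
Your proof is correct and follows essentially the same route as the paper's: substitute the update formulae into the consecutive differences, apply the triangle inequality, and re-index the sums to recognise them as total variations at the previous time level. The one point where you go beyond the paper is in explicitly flagging that the re-indexing step $\sum_i |f_{n_i}^{m-1}-f_{n_{i-1}}^{m-1}| = \mathbf{TV}(f_n^{m-1})$ relies on the sum running over the whole (effectively unbounded or periodic) lattice --- the paper silently absorbs this into the phrase ``it can be inferred from these expressions,'' so your added hypothesis is a welcome clarification rather than a deviation.
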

\begin{proof}
\begin{equation*}
\mathbf{TV}(f_n^m) = \sum_i \left| f_{n_{i+1}}^m - f_{n_{i}}^m\right|
\end{equation*}
Using \cref{D1Q3LBE_a,D1Q3LBE_b,D1Q3LBE_c}, we have
\begin{align*}
\mathbf{TV}(f_1^m) =  \sum_i \left| (1-\omega)f_{1_{i}}^{m-1} + \omega f_{1_{i}}^{eq^{m-1}} - (1-\omega) f_{1_{i-1}}^{m-1} - \omega f_{1_{i-1}}^{eq^{m-1}} \right| \\ 
\mathbf{TV}(f_2^m)  =  \sum_i \left| (1-\omega)f_{2_{i+1}}^{m-1} + \omega f_{2_{i+1}}^{eq^{m-1}} - (1-\omega) f_{2_{i}}^{m-1} - \omega f_{2_{i}}^{eq^{m-1}} \right| \\ 
\mathbf{TV}(f_3^m)  =  \sum_i \left| (1-\omega)f_{3_{i+2}}^{m-1} + \omega f_{3_{i+2}}^{eq^{m-1}} - (1-\omega) f_{3_{i+1}}^{m-1} - \omega f_{3_{i+1}}^{eq^{m-1}} \right|
\end{align*} 
It can be inferred from these expressions that for $n \in \{ 1,2,3\}$,
\begin{equation*}
\mathbf{TV}(f_n^m) \leq |1-\omega| \left( \sum_i \left|f_{n_{i+1}}^{m-1} - f_{n_{i}}^{m-1}\right| \right) + \omega \left( \sum_i \left|f_{n_{i+1}}^{eq^{m-1}} -  f_{n_{i}}^{eq^{m-1}}\right| \right) 
\end{equation*}
Hence,
\begin{equation*}
\mathbf{TV}(f_n^m)\leq \left|1-\omega \right|\mathbf{TV}(f_n^{m-1}) + \omega \ \mathbf{TV}(f_n^{eq^{m-1}}) 
\end{equation*}
\end{proof}

\begin{theorem}
If the conditions in Lemma \ref{lem1} and Lemma \ref{lem2} are satisfied along with $0<\omega<2$, then 
\begin{equation}
\label{Thm2}
\mathbf{TV}(u^{m}) \leq \mathcal{M}^m \ \mathbf{TV}(u^0) 
\end{equation} 
for some positive $\mathcal{M}^m$ dependent on the time level m, provided, at any lattice point, $\sum_{n=1}^N f_n^m = \sum_{n=1}^N f_n^{eq^m} = u^m$ and $f_n^0 = f_n^{eq^0}$ are satisfied . 
\end{theorem}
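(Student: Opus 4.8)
The plan is to control $\mathbf{TV}(u^m)$ not directly, but through the auxiliary quantity $S^m := \sum_{n=1}^N \mathbf{TV}(f_n^m)$, because the only per-step recursion available (Lemma \ref{lem2}) is posed on the distribution functions rather than on $u$ itself. First I would use the conserved moment $u^m = \sum_{n=1}^N f_n^m$ together with the triangle inequality to write
\begin{equation*}
\mathbf{TV}(u^m) = \sum_i \left| \sum_{n=1}^N \left( f_{n_{i+1}}^m - f_{n_i}^m \right) \right| \leq \sum_{n=1}^N \sum_i \left| f_{n_{i+1}}^m - f_{n_i}^m \right| = S^m,
\end{equation*}
so that it suffices to bound $S^m$.

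Next I would sum the inequality of Lemma \ref{lem2} over $n \in \{1,2,3\}$ (here $N = 2D+1 = 3$ since $D=1$) to obtain
\begin{equation*}
S^m \leq |1-\omega|\, S^{m-1} + \omega \sum_{n=1}^N \mathbf{TV}(f_n^{eq^{m-1}}).
\end{equation*}
Summing the three bounds of Lemma \ref{lem1} gives $\sum_{n=1}^N \mathbf{TV}(f_n^{eq^{m-1}}) \leq 4\,\mathbf{TV}(u^{m-1})$, and since $\mathbf{TV}(u^{m-1}) \leq S^{m-1}$ by the first step, the right-hand side is re-expressed entirely in terms of $S^{m-1}$, closing the recursion as a single scalar inequality
\begin{equation*}
S^m \leq \bigl( |1-\omega| + 4\omega \bigr)\, S^{m-1} =: C\, S^{m-1},
\end{equation*}
where $0 < \omega < 2$ guarantees that $C$ is a finite positive constant.

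Finally I would iterate this geometric recursion to obtain $S^m \leq C^m S^0$, and then dispose of the initial data: because $f_n^0 = f_n^{eq^0}$, the first step and Lemma \ref{lem1} yield $S^0 = \sum_{n=1}^N \mathbf{TV}(f_n^{eq^0}) \leq 4\,\mathbf{TV}(u^0)$. Combining the estimates,
\begin{equation*}
\mathbf{TV}(u^m) \leq S^m \leq C^m S^0 \leq 4\,C^m\,\mathbf{TV}(u^0),
\end{equation*}
so the claim holds with $\mathcal{M}^m = 4\,C^m = 4\bigl(|1-\omega| + 4\omega\bigr)^m > 0$, manifestly dependent on the time level $m$; equivalently, for $m \geq 1$ one may absorb the prefactor and take the genuine power $\mathcal{M} = 4C$, since $4C^m \leq (4C)^m$.

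The main obstacle is conceptual rather than computational: the recursion cannot be closed directly on $\mathbf{TV}(u^m)$, because Lemma \ref{lem2} feeds $\mathbf{TV}(f_n^{m-1})$ back on its right-hand side and no estimate of the form $\mathbf{TV}(f_n^m) \leq c\,\mathbf{TV}(u^m)$ is available (only the reverse inequality, and the equilibrium estimate of Lemma \ref{lem1}). Introducing the summed total variation $S^m$ is exactly what allows the right-hand side to be rewritten in terms of $S^{m-1}$, converting a coupled family of inequalities into one scalar geometric recursion; everything else is routine use of the triangle inequality and the two lemmas. One should note that, since $C > 1$ whenever $\omega > 0$, the resulting bound grows with $m$, so this establishes boundedness of the total variation at each finite time level rather than a uniform (time-independent) TVD estimate.
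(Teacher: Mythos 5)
Your proof is correct, and it takes a genuinely different (and cleaner) route than the paper's. The paper works on $\mathbf{TV}(u^{m+1})$ directly: it applies the triangle inequality to the update equations \cref{D1Q3LBE_a,D1Q3LBE_b,D1Q3LBE_c}, then substitutes a full-history bound on each $\mathbf{TV}(f_n^m)$ obtained by unrolling Lemma \ref{lem2} down to time level $0$ (which leaves terms $\mathbf{TV}(u^k)$ at every previous level $k$), and finally resolves the resulting tangle by ``repeated substitution \ldots until all the terms become some multiple of $\mathbf{TV}(u^0)$,'' ending with $\mathcal{M}^{m+1} = \sum_{p=0}^{m+1} M_p\,\omega^{m+1-p}|1-\omega|^{p}$ in which the constants $M_p$ are asserted finite but never computed. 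Your auxiliary quantity $S^m = \sum_{n=1}^N \mathbf{TV}(f_n^m)$ short-circuits all of this: the observation $\mathbf{TV}(u^{m-1}) \leq S^{m-1}$ closes the recursion in one step as $S^m \leq \left(|1-\omega|+4\omega\right) S^{m-1}$, so the iteration is a plain geometric progression and the constant $\mathcal{M}^m = 4\left(|1-\omega|+4\omega\right)^m$ is fully explicit. Both arguments use exactly the same ingredients (Lemma \ref{lem1}, Lemma \ref{lem2}, the conserved moment, and $f_n^0 = f_n^{eq^0}$), so what your version buys is transparency and rigor: the paper's two-stage substitution with unspecified $M_p$ becomes a one-line induction with an explicit growth rate. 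Your closing caveat is also accurate for both proofs --- since the rate constant exceeds $1$, this is boundedness of total variation at each finite time level, not a uniform (time-independent) TVD estimate; the paper's $\mathcal{M}^m$ grows with $m$ in the same way.
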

\begin{proof}
Let's assume that the conditions in Lemma \ref{lem1} and Lemma \ref{lem2} are satisfied, so that Lemma \ref{lem1} and Lemma \ref{lem2} hold true. Repeated substitution of \cref{LEM1} and \cref{Lem2} in \cref{Lem2} gives,
\begin{multline}
\label{TV1}
\mathbf{TV}(f_n^m) \leq  \left|1-\omega\right|^m \mathbf{TV}(f_n^0) \ + \\ 
 2\omega \left( \left|1-\omega\right|^{m-1} \mathbf{TV}(u^0) + ...+  \left|1-\omega\right|^{0} \mathbf{TV}(u^{m-1}) \right)
\end{multline}
Since $f_{n_i}^0 = f_{n_i}^{eq^0} \ \forall i$, we have $\mathbf{TV}(f_n^0)=\mathbf{TV}(f_n^{eq^0}) \leq 2 \ \mathbf{TV}(u^0)$. Using this in \cref{TV1}, we get
\begin{multline}
\label{TV3}
\mathbf{TV}(f_n^m)  \leq  2\ \left|1-\omega\right|^m \mathbf{TV}(u^0) \ + \\ 
 2\omega \left( \left|1-\omega\right|^{m-1} \mathbf{TV}(u^0) + ...+  \left|1-\omega\right|^{0} \mathbf{TV}(u^{m-1}) \right)
\end{multline} 
Let us now evaluate $\mathbf{TV}(u^{m+1})$. 
\begin{subequations}
\begin{align*}
\mathbf{TV}(u^{m+1}) &=  \sum_i \left| u_{i+1}^{m+1}-u_i^{m+1}\right| \\ 
 &=  \sum_i \left| \sum_{n=1}^3 f_{n_{i+1}}^{m+1}- \sum_{n=1}^3f_{n_i}^{m+1}\right|
 \end{align*}
\end{subequations}
Using \cref{D1Q3LBE_a,D1Q3LBE_b,D1Q3LBE_c} in the above equation and simplifying, the following inequality is obtained. 
\begin{multline*}
\mathbf{TV}(u^{m+1})  \leq  |1-\omega| \left( \sum_i  \left| f_{1_i}^m-f_{1_{i-1}}^m \right|+ \sum_i  \left| f_{2_{i+1}}^m-f_{2_{i}}^m\right| + \sum_i \left|f_{3_{i+2}}^m-f_{3_{i+1}}^m\right| \right) \\ 
 + \omega \left( \sum_i \left| f_{1_i}^{eq^m}-f_{1_{i-1}}^{eq^m} \right| + \sum_i \left| f_{2_{i+1}}^{eq^m}-f_{2_{i}}^{eq^m} \right| + \sum_i  \left| f_{3_{i+2}}^{eq^m}-f_{3_{i+1}}^{eq^m}\right| \right)
\end{multline*}
Hence,
\begin{multline}
\label{TV4}
\mathbf{TV}(u^{m+1})  \leq   |1-\omega| \left( \mathbf{TV}(f_1^m) + \mathbf{TV}(f_2^m) + \mathbf{TV}(f_3^m)\right) \\ 
 + \omega \left( \mathbf{TV}(f_1^{eq^m}) + \mathbf{TV}(f_2^{eq^m}) + \mathbf{TV}(f_3^{eq^m})\right)
\end{multline}
Substituting \cref{TV3} and \cref{Lem1_a,Lem1_b,Lem1_c} in \cref{TV4}, we have
\begin{multline}
\mathbf{TV}(u^{m+1})  \leq 4\omega \ \mathbf{TV}(u^m) \ +\\ 6\omega \left( |1-\omega| \mathbf{TV} (u^{m-1})+...+|1-\omega|^m \mathbf{TV} (u^0)\right) \\  + 6|1-\omega|^{m+1}\mathbf{TV}(u^0)
\end{multline}
Doing repeated substitution of $\mathbf{TV}(u^k) \ \mbox{for} \ k \in \{1,..,m \}$ in the above inequality until all the terms become some multiple of $\mathbf{TV}(u^0)$, we get
\begin{equation}
\mathbf{TV}(u^{m+1}) \leq \left( \sum_{p=0}^{m+1} M_p \ \omega^{m+1-p} |1-\omega|^{p} \right) \mathbf{TV}(u^0)
\end{equation} 
where $M_p \ \mbox{for} \ p \in \{0,1,..,m+1 \}$ are finite positive constants. As $0<\omega<2$, we have $0<|1-\omega|<1$. Since $M_p, \ \omega^{m+1-p}$ and $|1-\omega|^p$ are finite, $\sum_{p=0}^{m+1} M_p \ \omega^{m+1-p} |1-\omega|^{p} = \mathcal{M}^{m+1}$ is a finite positive constant. So, we have
\begin{equation}
\mathbf{TV}(u^{m+1}) \leq \mathcal{M}^{m+1} \mathbf{TV}(u^0)
\end{equation}
\end{proof}
If $\mathbf{TV}(u^0)<\infty$, then from \cref{Thm2}, we have 
\begin{equation}
\mathbf{TV}(u^{m})<\infty
\end{equation} 
since $\mathcal{M}^m$ is a finite positive constant. Hence, total variation of solution $u^m$ from the formulated LB scheme for one dimensional scalar conservation laws is bounded. 

\subsection{Numerical results}
In this subsection, numerical results for some test problems governed by one and two dimensional scalar conservation laws are provided.  
\subsubsection{One dimensional linear convection equation}
This test case is from Chen and Shu \cite{CS2017}. Consider the one dimensional linear convection equation $\partial_t u + \partial_x g(u) = 0 \ \mbox{with} \ g(u)=u$ in the domain $[0,2\pi]$. Initial condition is $u(x,0)=sin^4(x)$ and boundary conditions are periodic. Numerical solution at $T=2\pi$ obtained by splitting the domain into 41 evenly spaced lattice points is shown in \cref{4}.
\begin{figure}[h!] 
\centering
\includegraphics[width=0.5\textwidth]{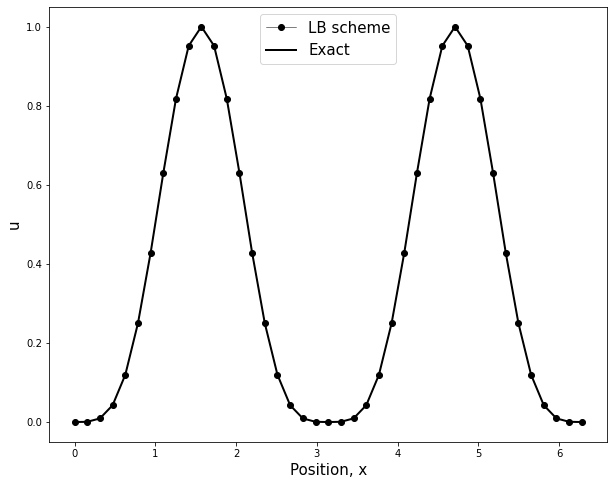}
\caption{ One dimensional linear convection equation}
\label{4}
\end{figure}
\subsubsection{Two dimensional linear scalar conservation law}
Consider the following linear scalar conservation law, $\partial_t u+ \partial_{x_1} g_1(u) + \partial_{x_2} g_2(u) = 0 \ \text{with} \ g_1(u)=au, \ g_2(u)=bu$. This equation is solved for various boundary and initial conditions using the formulated LB scheme. \\
\emph{Discontinuities at various angles:}
This test case is from Spekreijse \cite{Spe1987}. The domain is $[0,1] \times [0,1]$ and $a=cos \ \theta$, $b=sin \ \theta$ with $\theta \in \left(0, \frac{\pi}{2}\right)$. Boundary conditions of the steady-state problem are $u(0,x_2)=1 \ \text{for} \ 0<x_2<1$ and $u(x_1,0)=0 \ \text{for} \ 0<x_1<1$. Exact solution of the steady-state problem is, $u(x_1,x_2)=1 \ \text{for} \ bx_1-ax_2<0$ and $u(x_2,x_2)=0 \ \text{for} \ bx_1-ax_2>0$. For numerical solution using the formulated LB scheme, the domain is split up into $65 \times 65$ evenly spaced lattice points. The numerical solutions plotted for $\theta = 15, \ 30, \ 45, \ 60 \ \mbox{and} \ 75$ are shown in \cref{21a,21b,21c,21d,21e}. It can be seen that the upwind LB scheme captures discontinuities that are not aligned with the lattice coordinate directions reasonably well. 
 \begin{figure}
\centering
\begin{subfigure}[b]{0.31\textwidth}
\centering
\includegraphics[width=\textwidth]{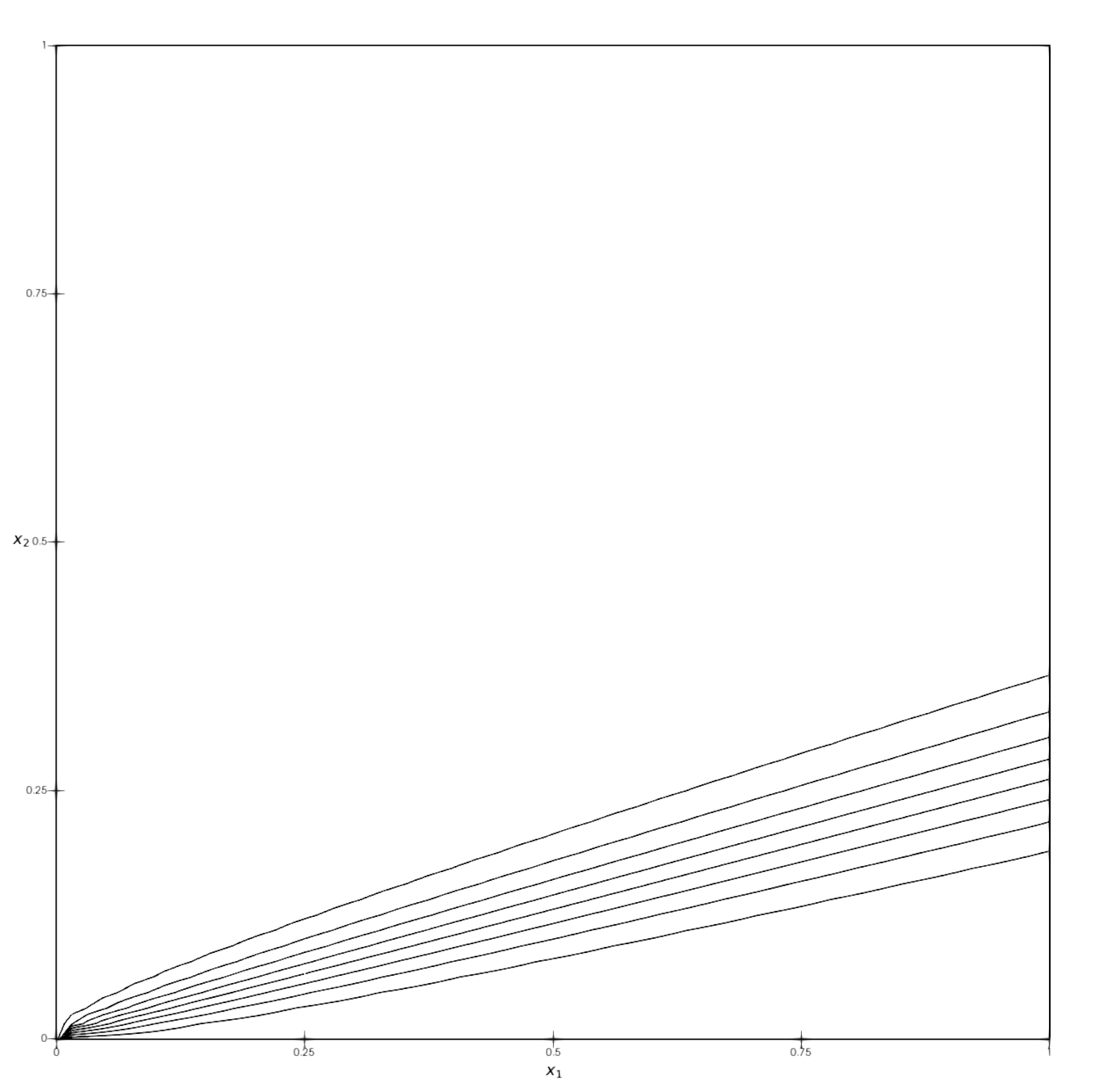}
\caption{ Discontinuity at $\theta=15^{\circ} $}
\label{21a}
\end{subfigure}
\hfill
\begin{subfigure}[b]{0.31\textwidth}
\centering
\includegraphics[width=\textwidth]{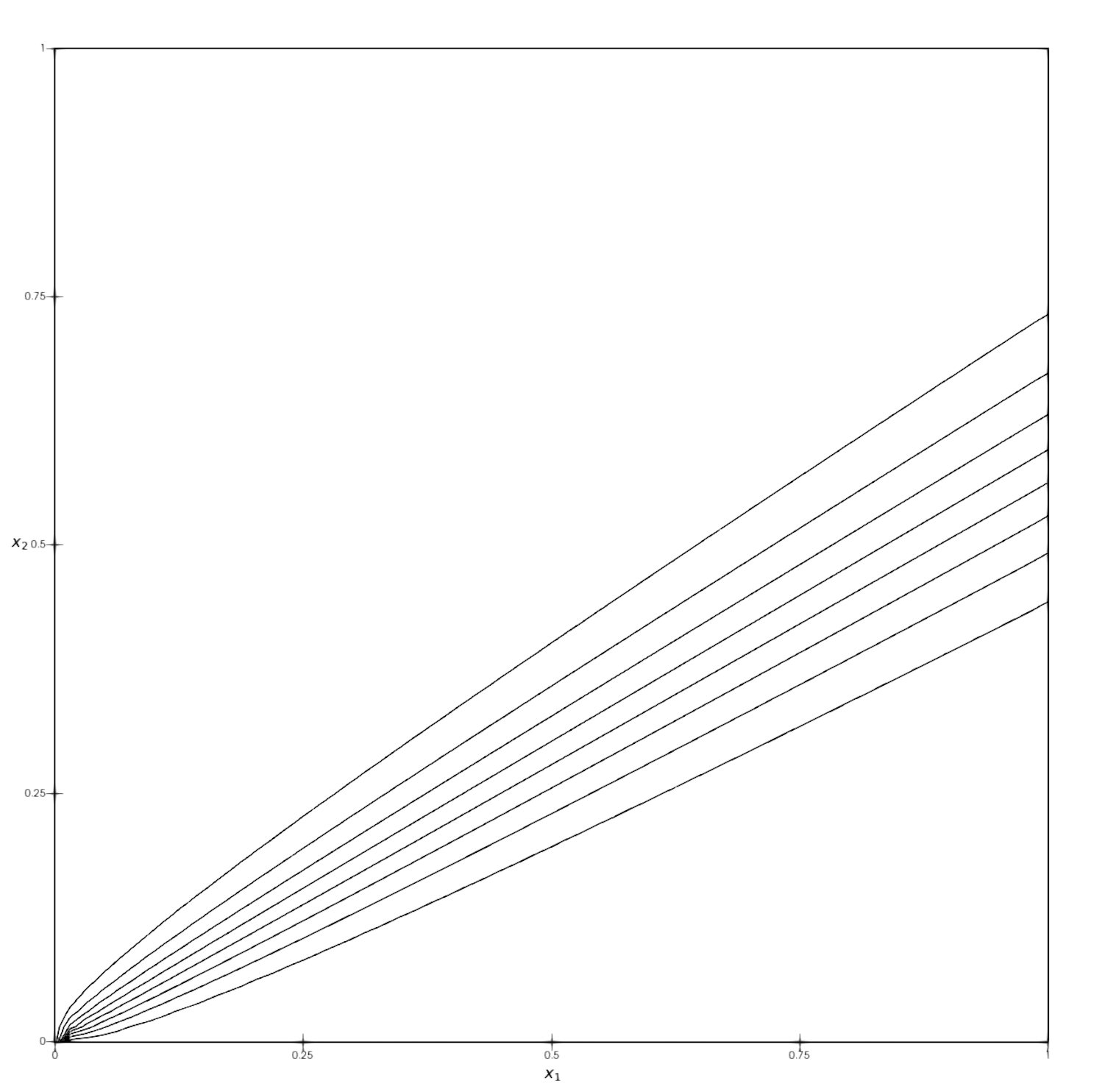}
\caption{ Discontinuity at $\theta=30^{\circ} $}
\label{21b}
\end{subfigure}
\hfill 
\begin{subfigure}[b]{0.31\textwidth}
\centering
\includegraphics[width=\textwidth]{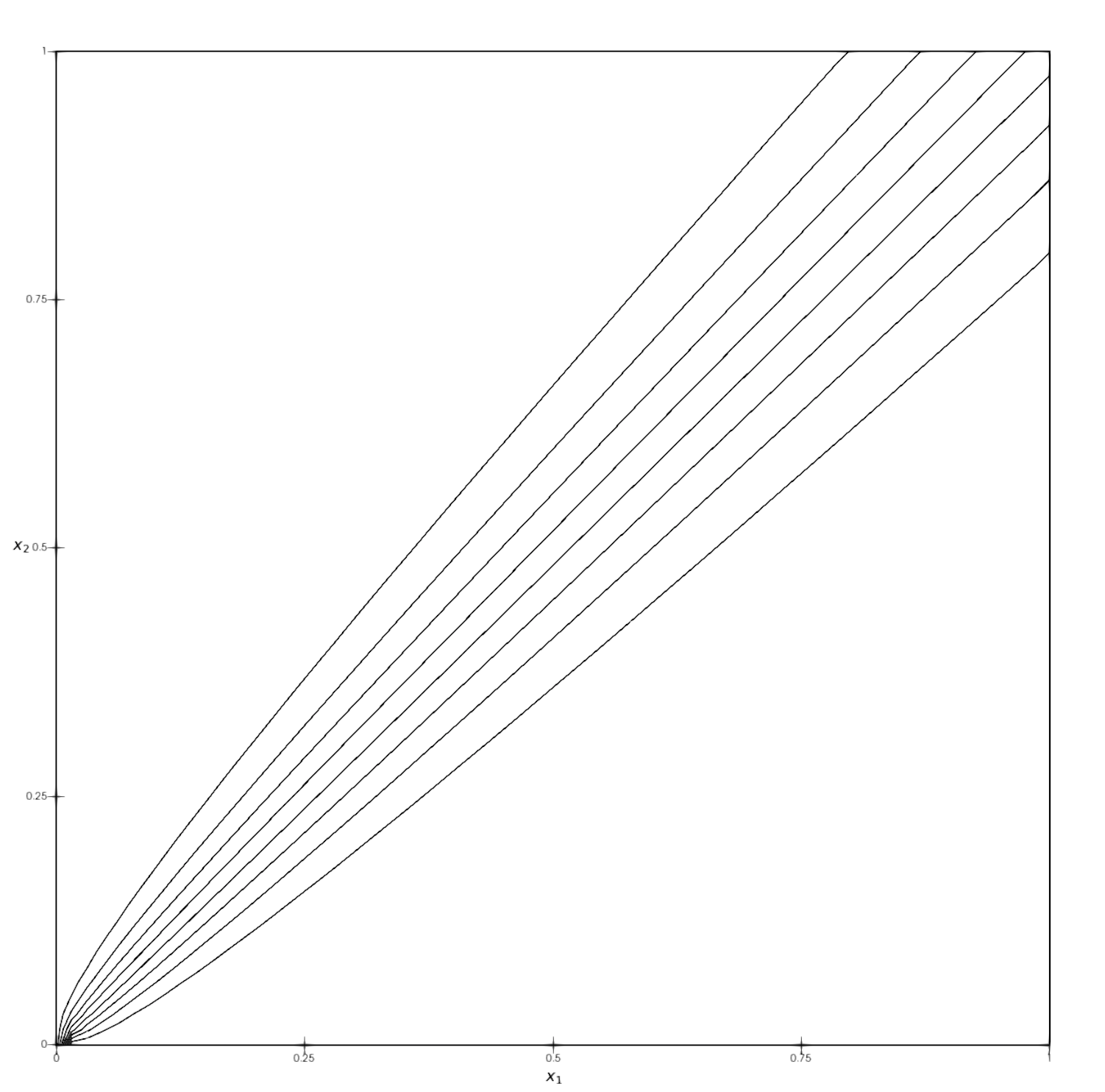}
\caption{ Discontinuity at $\theta=45^{\circ} $}
\label{21c}
\end{subfigure}
\vfill
\begin{subfigure}[b]{0.31\textwidth}
\centering
\includegraphics[width=\textwidth]{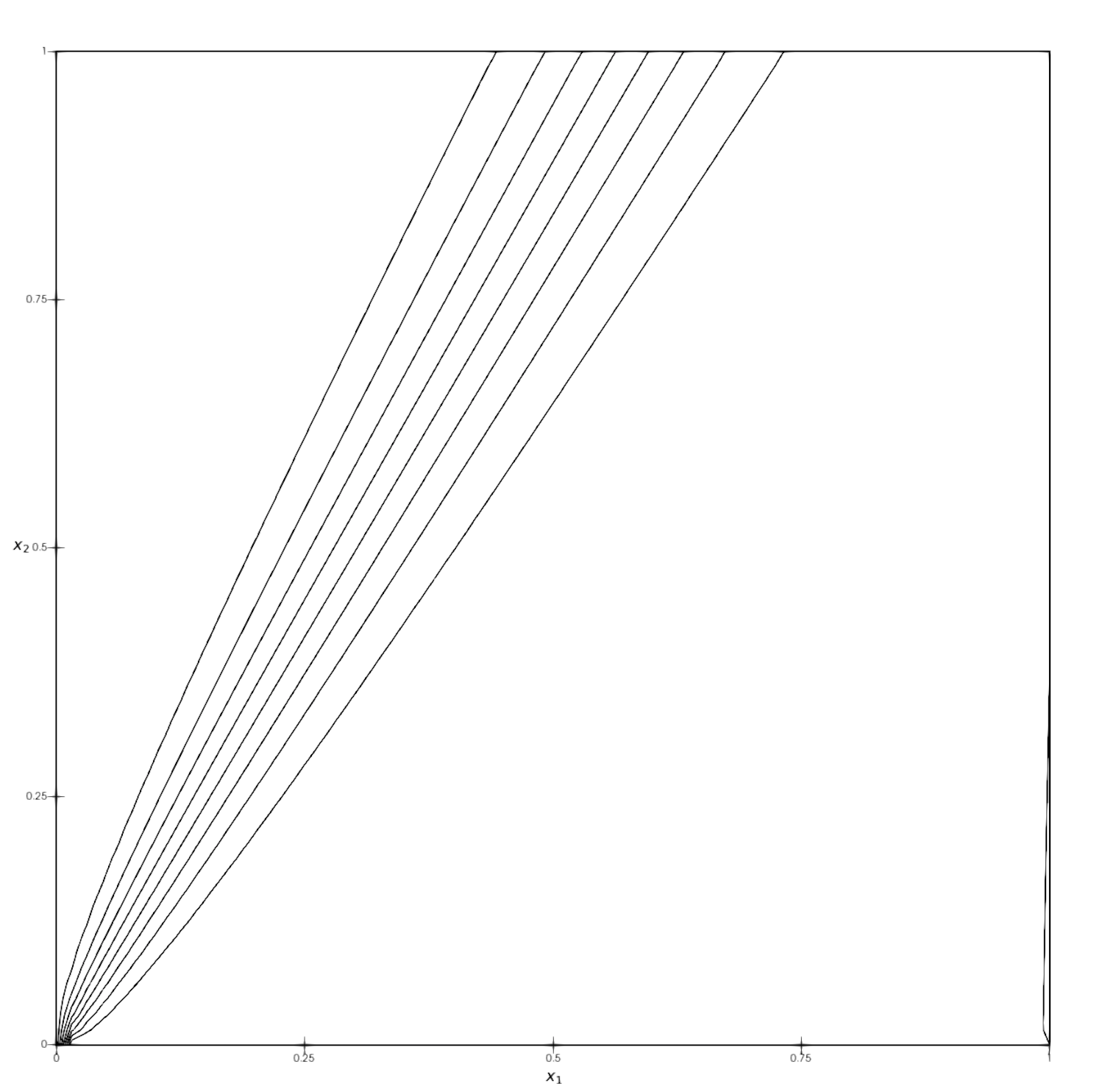}
\caption{ Discontinuity at $\theta=60^{\circ} $}
\label{21d}
\end{subfigure}
\hfill
\begin{subfigure}[b]{0.31\textwidth}
\centering
\includegraphics[width=\textwidth]{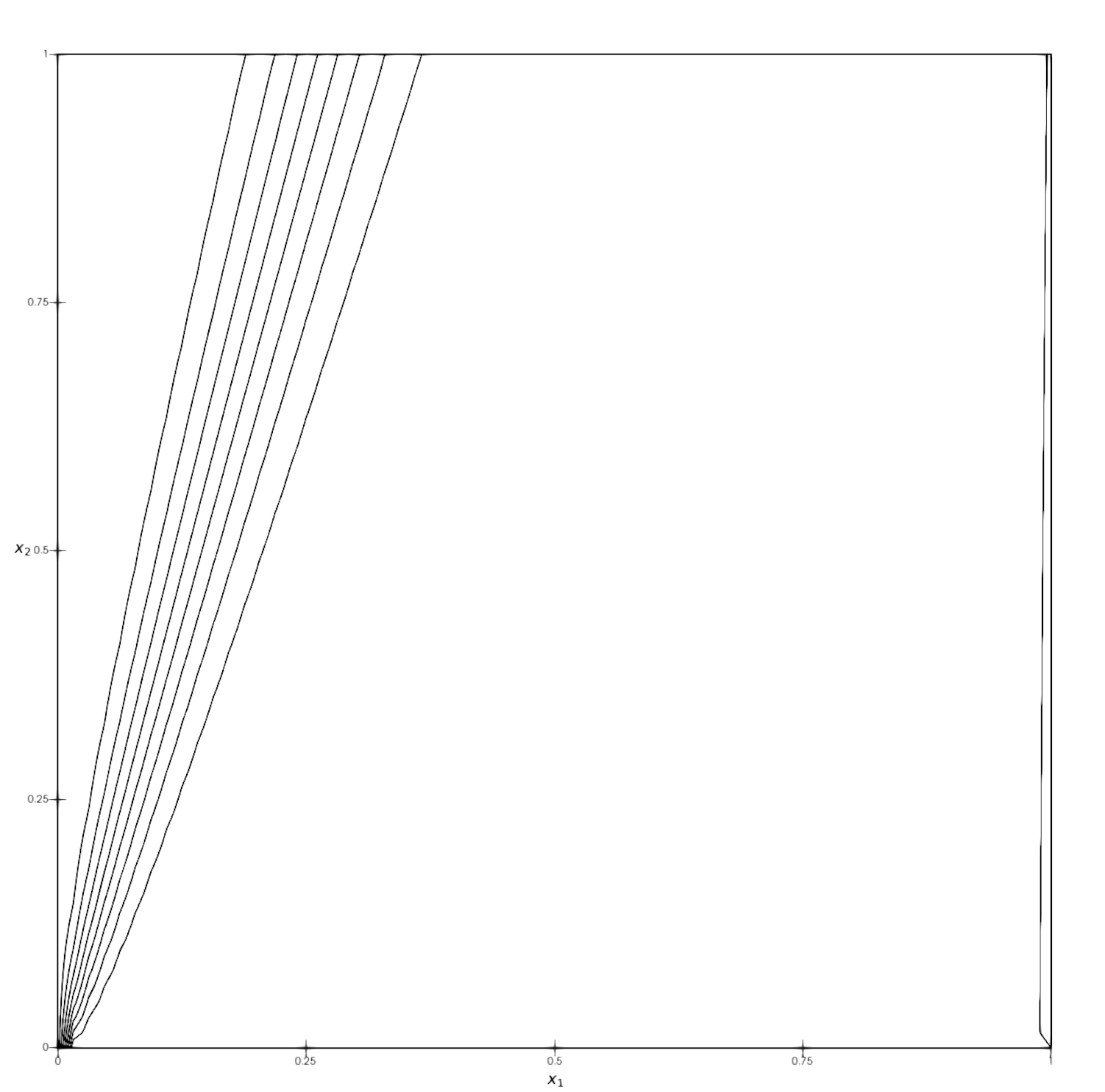}
\caption{ Discontinuity at $\theta=75^{\circ} $}
\label{21e}
\end{subfigure}
\caption{ Discontinuities at various angles: 2D Linear convection equation}
\label{21}
\end{figure} \\
\emph{Semi-circular discontinuities:}
This test case is also from Spekreijse \cite{Spe1987}. Domain is $[-1,1]\times[0,1]$ and $a=x_2, \ b=x_1$. Boundary conditions of the steady-state problem are $u(x_1,0)=0 \ \text{for} \ x_1<-0.65$, $u(x_1,0)=1 \  \text{for} \ -0.35 \geq x_1\geq-0.65$, $u(x_1,0)=0 \ \text{for} \ 0\geq x_1>-0.35$ and  $u(\pm1,x_2)=0 \  \text{for} \ 0<x_2<1$. Exact solution of the steady-state problem is, $u(x_1,x_2)=1 \ \mbox{for} \ 0.35\leq\sqrt{x_1^2+x_2^2}\leq0.65$ and $u(x_1,x_2)=0 \ \mbox{otherwise}$. Numerical solution computed using the formulated LB scheme by splitting the domain into $65 \times 33$ evenly spaced lattice points is shown in \cref{22}. \begin{figure}
\centering
\begin{minipage}{.5\textwidth}
\centering 
\includegraphics[width=0.9\textwidth]{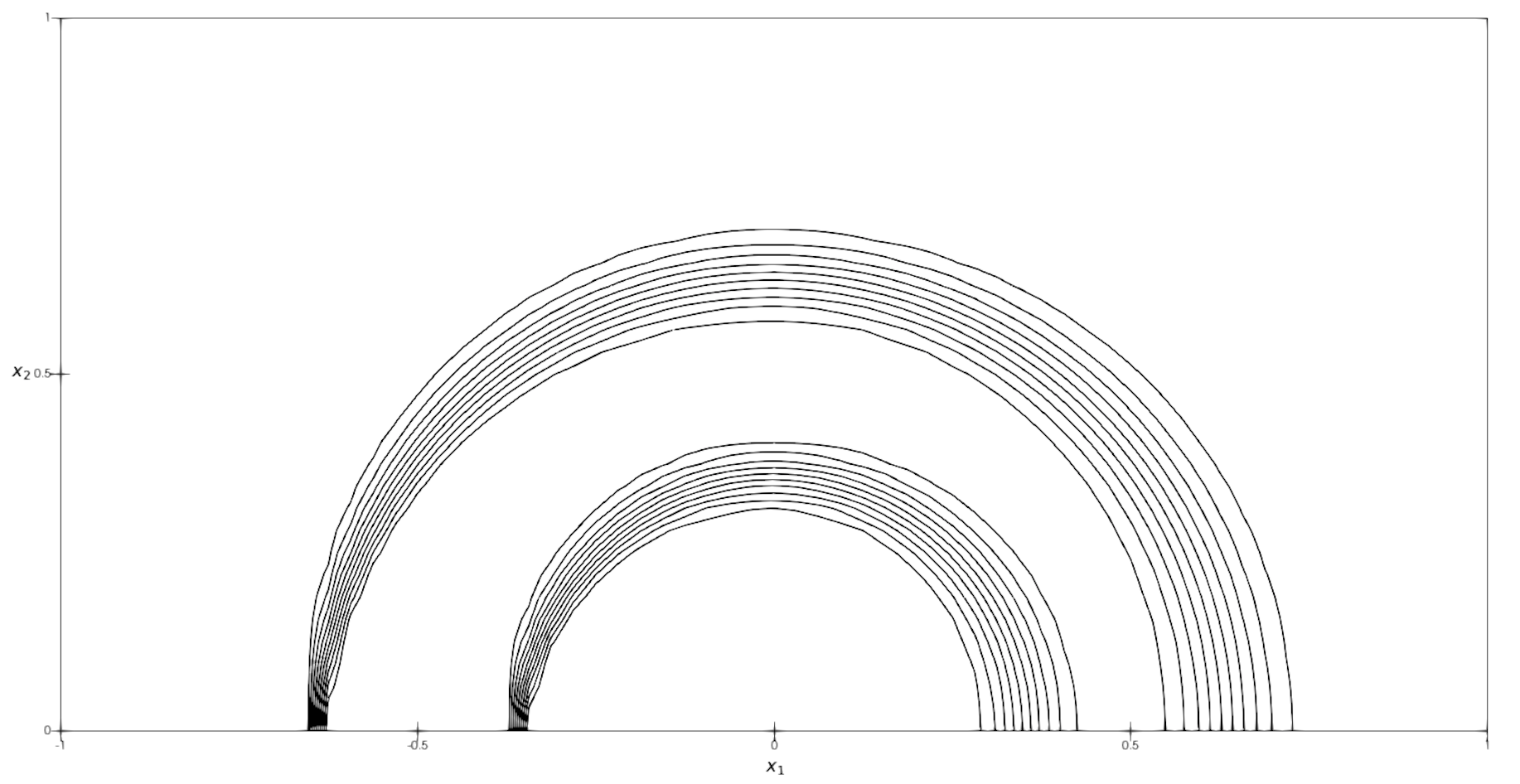}
\captionof{figure}{Semi-circular discontinuities}
\label{22}
\end{minipage}%
\begin{minipage}{.5\textwidth}
\centering
\includegraphics[width=0.9\textwidth]{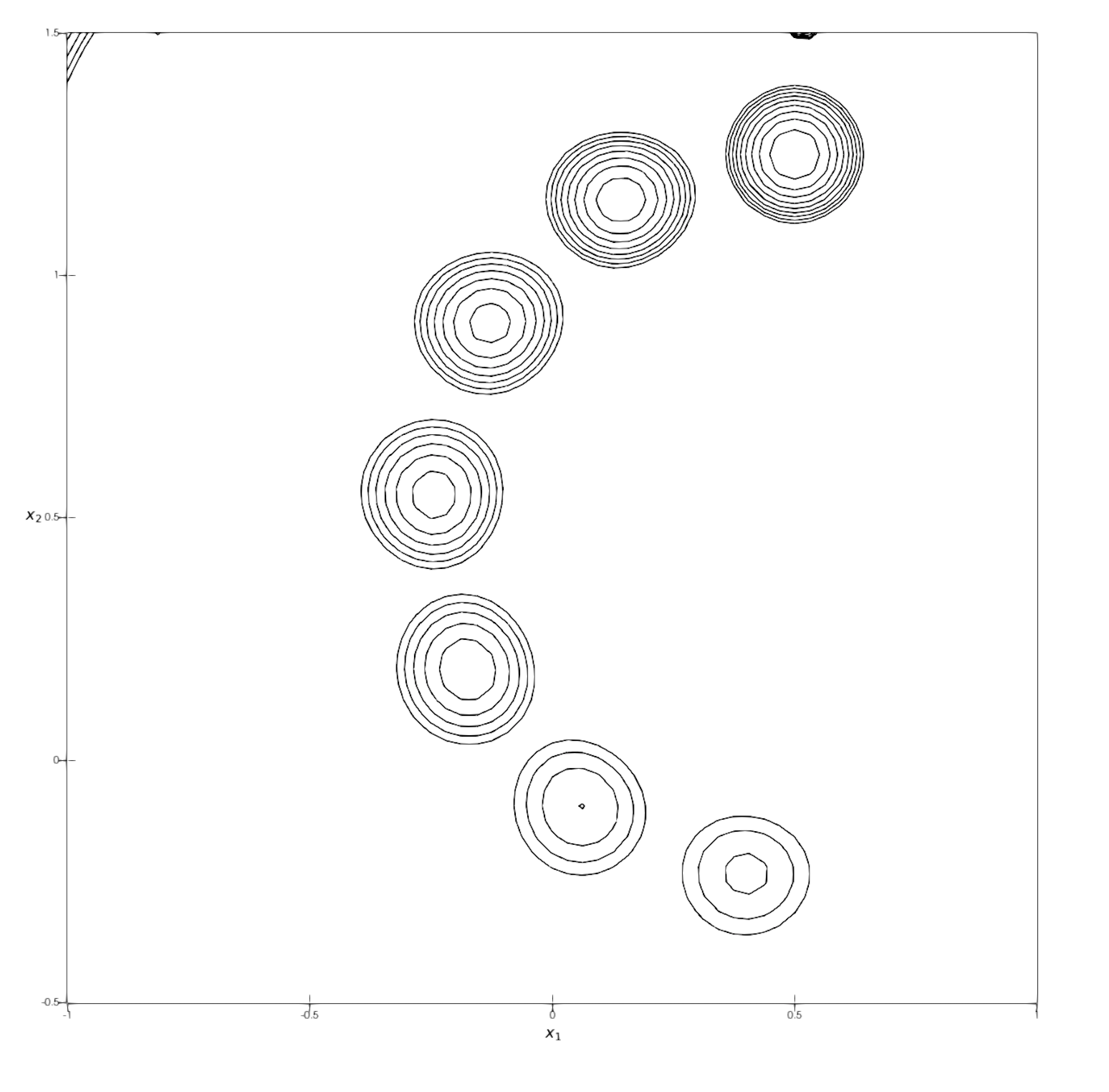}
\captionof{figure}{Solid body rotation}
\label{23}
\end{minipage} 
\end{figure}\\
\emph{Solid body rotation:}
This test case is from LeVeque \cite{L1996}. Here, $a=-(x_2-\frac{1}{2}), \ b=(x_1-\frac{1}{2})$. The initial condition is $u(x_1,x_2,0)=\frac{1}{4} \left( 1+cos(\pi r(x_1,x_2)) \right)$ with $r(x_1,x_2)=min \left( \sqrt{(x_1-x_0)^2+(x_2-y_0)^2},r_0\right)/r_0$ on the domain $[-1,1]\times[-0.5,1.5]$, where $x_0=0.5, \ y_0=1.25 \ \text{and} \ r_0=0.2$. Numerical solutions computed using $65 \times 65$ evenly spaced lattice points on the domain, for $T=0, \ 0.5, \ 1, \ 1.5, \ 2, \ 2.5 \ \text{and} \ 3$ plotted in \cref{23} depict the counter-clockwise rotation of solid body. For all times, $10$ contour levels are sampled out between $u=0.1 \ \text{and} \ u=0.5$. As amplitude reduces over time due to diffusion, the number of contour levels reduce.  

\subsubsection{One dimensional inviscid Burgers' equation}
One dimensional inviscid Burgers' equation, $\partial_t u + \partial_x g(u) = 0 \ \mbox{with} \ g(u)=\frac{1}{2}u^2$, is solved using the formulated LB scheme for various boundary and initial conditions. \\
\emph{Initial sine wave profile:}
This test case is from Ben-Artzi \& Falcovitz \cite{BF2003}. Position variable $x \in [0,1]$ and initial condition is $u(x,0)=sin(2\pi x)$. Exact solution is found using the method of characteristics. For numerical solution using upwind LB scheme, the domain [0,1] is split up into 81 evenly spaced lattice points and the LB algorithm is followed.  \Cref{1} shows the sequential behaviour of $u$ over time $t$. The initial sine wave profile at $t=0$ is shown in \cref{1a}. A smooth compressed profile formed at $t=\frac{0.8}{2\pi}$ is shown in \cref{1b}. Over time, the profile compresses and forms a discontinuity at $t=\frac{1}{2\pi}$ as shown in \cref{1c}. The jump discontinuity at $x=0.5$ gradually increases and becomes a jump between $u=1 \ \mbox{and} \ u=-1$ at $t=0.25$ as shown in \cref{1d}. At this time, the propagation speed of shock vanishes and standing shock is formed. At later times, the jump discontinuity decreases and becomes a jump between $u<1 \ \mbox{and} \ u>-1$ as shown in \cref{1e,1f}. 
\begin{figure}
\centering
\begin{subfigure}[b]{0.47\textwidth}
\centering
\includegraphics[width=\textwidth]{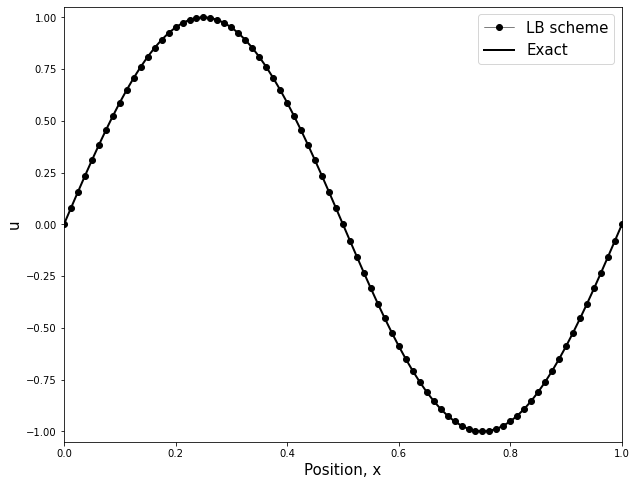}
\caption{ Sine wave profile at $t=0$}
\label{1a}
\end{subfigure}
\hfill
\begin{subfigure}[b]{0.47\textwidth}
\centering
\includegraphics[width=\textwidth]{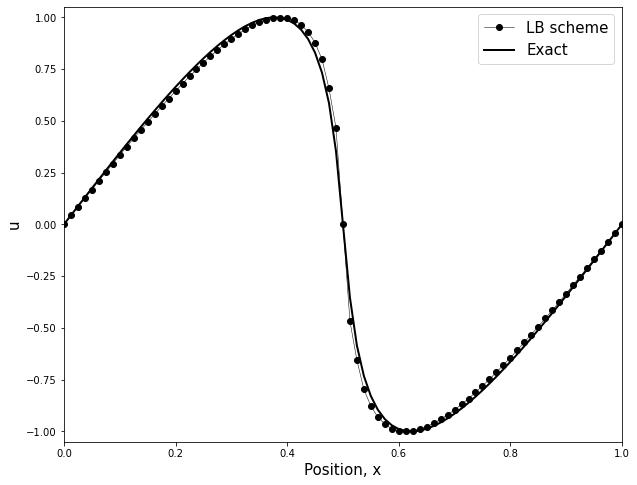}
\caption{ Smooth profile at $t=\frac{0.8}{2\pi}$}
\label{1b}
\end{subfigure}
\vfill 
\begin{subfigure}[b]{0.47\textwidth}
\centering
\includegraphics[width=\textwidth]{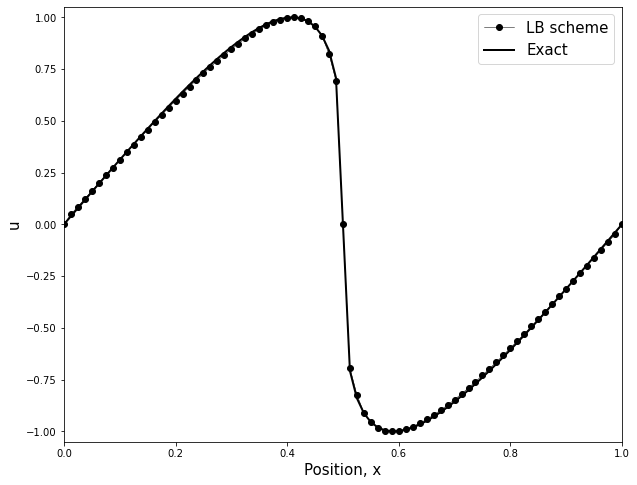}
\caption{ Discontinuous profile at $t=\frac{1}{2\pi}$}
\label{1c}
\end{subfigure}
\hfill
\begin{subfigure}[b]{0.47\textwidth}
\centering
\includegraphics[width=\textwidth]{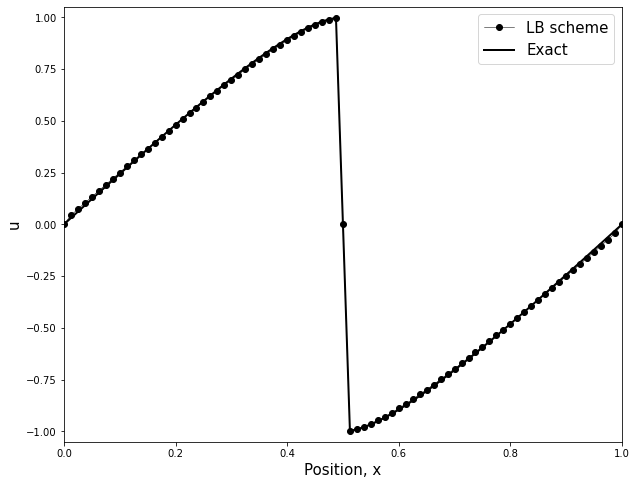}
\caption{ Discontinuous profile at $t=0.25$}
\label{1d}
\end{subfigure}
\vfill
\begin{subfigure}[b]{0.47\textwidth}
\centering
\includegraphics[width=\textwidth]{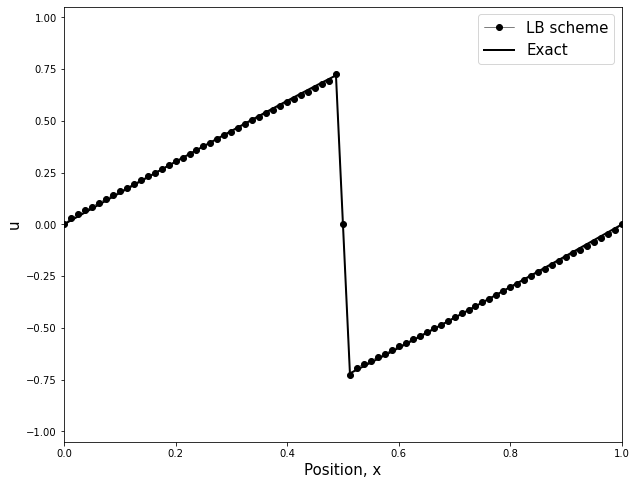}
\caption{ Standing shock at $t=0.5$}
\label{1e}
\end{subfigure}
\hfill  
\begin{subfigure}[b]{0.47\textwidth}
\centering
\includegraphics[width=\textwidth]{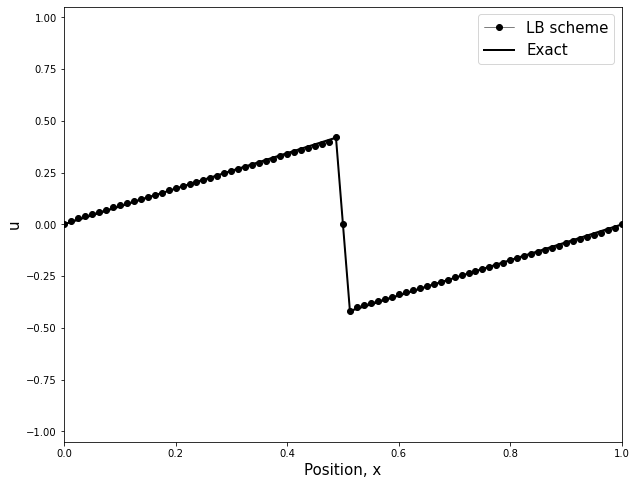}
\caption{ Standing shock at $t=1$}
\label{1f}
\end{subfigure}
\caption{ Initial Sine wave profile for inviscid Burgers' equation}
\label{1}
\end{figure}\\
\emph{Initial square wave profile without a sonic point:}
This test case is from Laney \cite{Lan1998}. Domain of the problem is $[-1,1]$ and initial condition is the square wave, $u(x,0)= 1 \ \text{for} \  |x| \leq \frac{1}{3}$ and $u(x,0)= 0 \  \text{for} \  \frac{1}{3}<|x|\leq1$. Exact solution is found using the method of characteristics. For numerical solution, the domain is split up into 41 evenly spaced lattice points. The jump from $u=0 \ \mbox{to} \ u=1$ at $x=-\frac{1}{3}$ becomes an expansion fan while the jump from $u=1 \ \mbox{to} \ u=0$ at $x=\frac{1}{3}$ becomes a shock. The solution at $t=0.6$ is shown in \cref{2}. 
\begin{figure}
\centering
\begin{minipage}{.45\textwidth}
\centering
\includegraphics[width=0.9\textwidth]{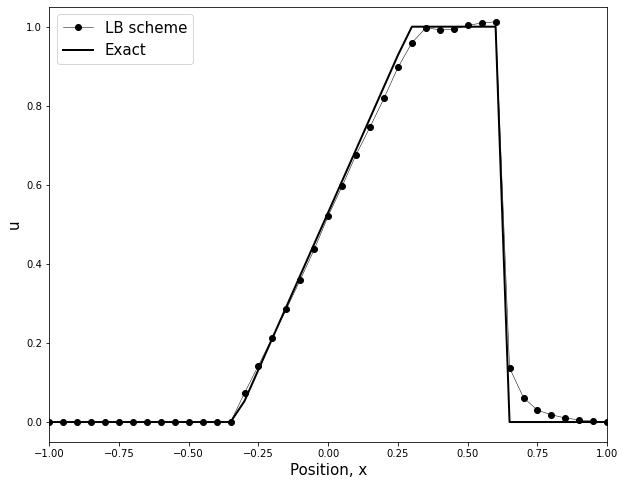}
\captionof{figure}{Initial square wave profile without a sonic point}
\label{2}
\end{minipage}%
\hfill 
\begin{minipage}{.45\textwidth}
\centering
\includegraphics[width=0.9\textwidth]{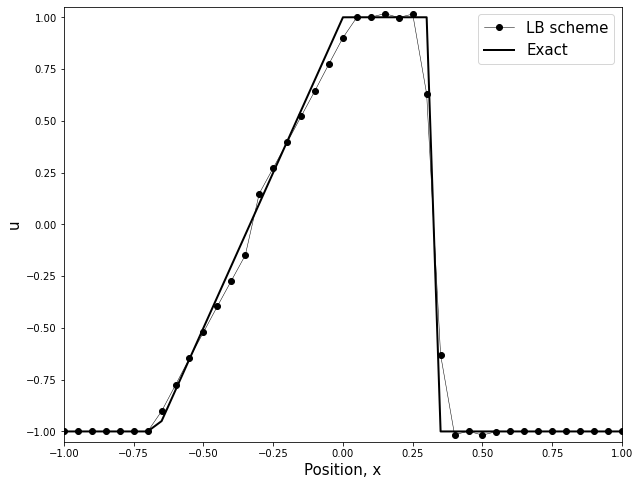}
\captionof{figure}{Initial square wave profile with a sonic point}
\label{3}
\end{minipage}
\end{figure}\\
\emph{Initial square wave profile with a sonic point:}
This test case is also from Laney \cite{Lan1998}. Domain of the problem is $[-1,1]$ and initial condition is the square wave, $u(x,0)=  1 \ \text{for} \  |x| \leq \frac{1}{3}$ and $u(x,0)= -1 \  \text{for} \  \frac{1}{3}<|x|\leq1$. Here again, the exact solution is found using method of characteristics. The domain is split up into 41 evenly spaced lattice points for finding the numerical solution. The jump from $u=-1 \ \mbox{to} \ u=1$ at $x=-\frac{1}{3}$ becomes an expansion fan with a sonic point while the jump from $u=1 \ \mbox{to} \ u=-1$ at $x=\frac{1}{3}$ becomes a stationary shock. Expansion fan and shock wave symmetrically span the sonic point $u=0$. The solution at $t=0.3$ is shown in \cref{3}. 

\subsubsection{Two dimensional non-linear scalar conservation law}
The two dimensional non-linear scalar conservation law is $\partial_t u+ \partial_{x_1} g_1(u) + \partial_{x_2} g_2(u) = 0$ with $g_1(u)$ and $g_2(u)$ being non-linear functions of $u$. This equation is solved for various boundary and initial conditions using the upwind LB scheme.
\begin{figure}[h!] 
\centering
\begin{minipage}{.4\textwidth}
\centering
\includegraphics[width=0.9\textwidth]{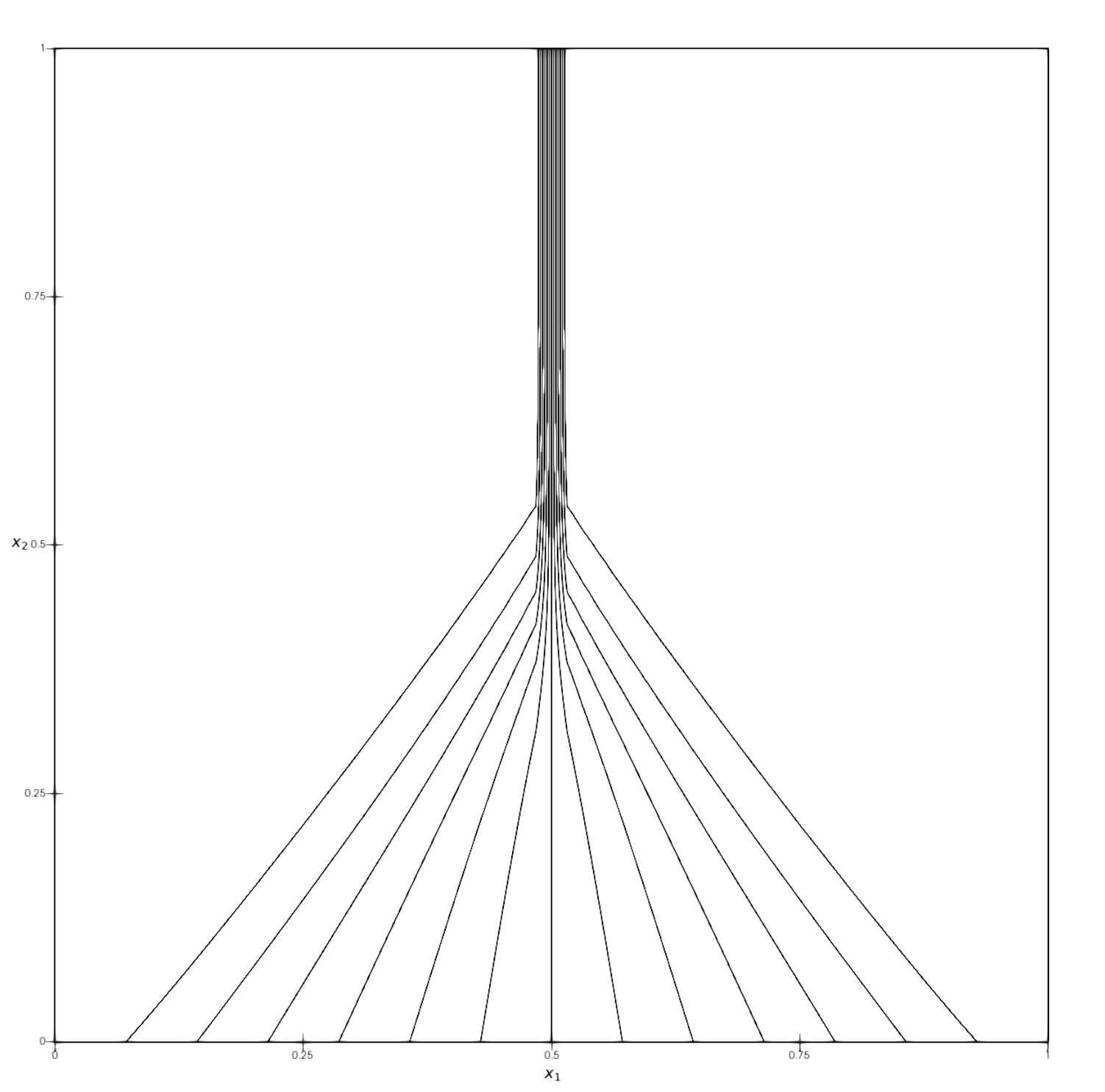}
\captionof{figure}{Normal shock}
\label{25}
\end{minipage}%
\begin{minipage}{.4\textwidth}
\centering
\includegraphics[width=0.9\textwidth]{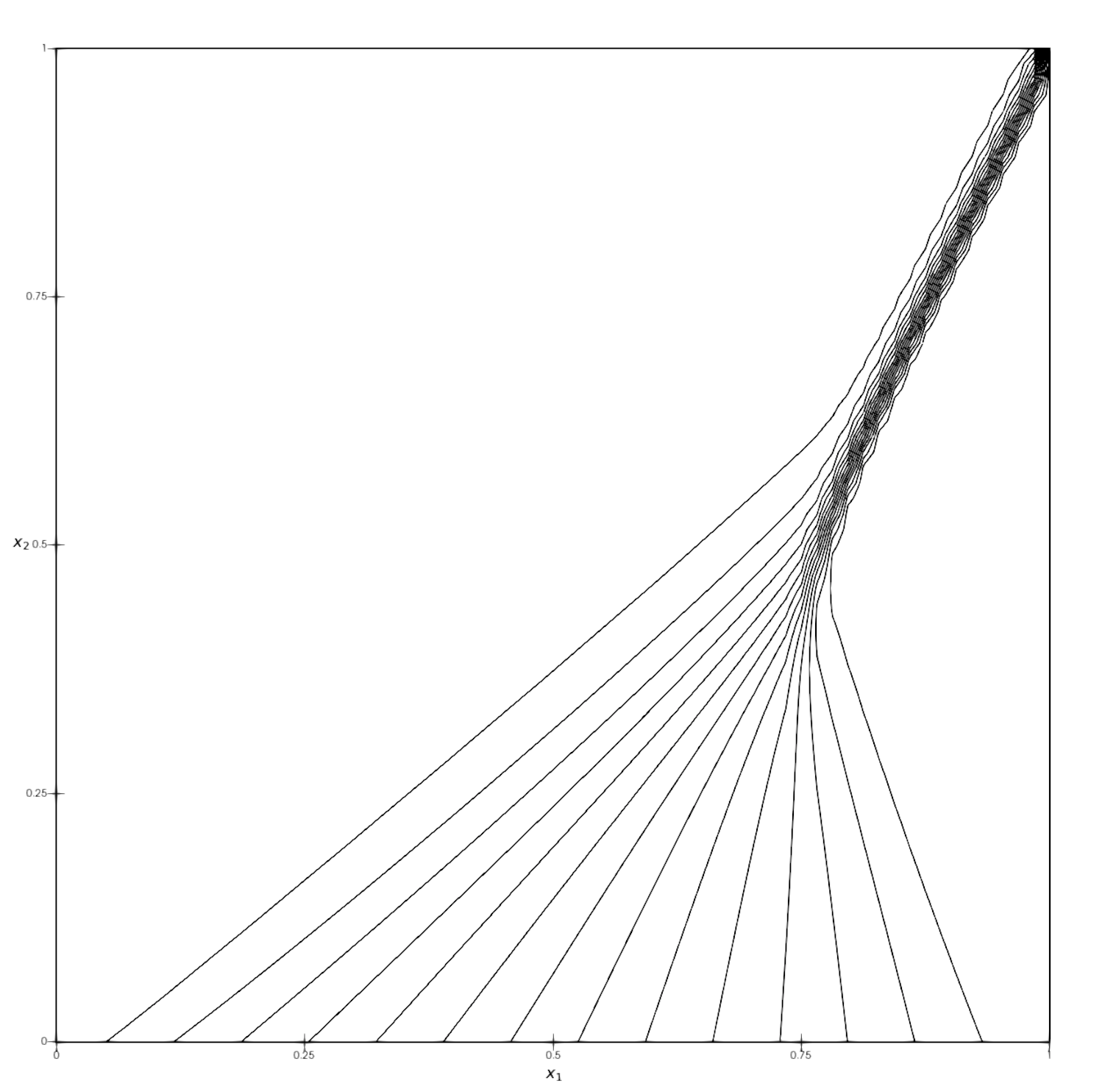}
\captionof{figure}{Oblique shock}
\label{26}
\end{minipage}
\end{figure}\\
\emph{Normal shock:}
This test case is from Spekreijse \cite{Spe1987}. Domain of the problem is $[0,1]\times[0,1]$ and the flux functions are $g_1(u)=\frac{1}{2}u^2$ and $g_2(u)=u$, with nonlinearity in $g_1(u)$. Boundary conditions for the steady-state problem are $u(0,x_2)=1 \ \text{for} \  0<x_2<1$, $u(1,x_2)=-1 \ \mbox{for} \  0<x_2<1$ and $u(x_1,0)=1-2x_1 \ \mbox{for} \ 0<x_1<1$. Numerical solution computed by splitting the domain into $65 \times 65$ evenly spaced lattice points is shown in \cref{25}. \\ 
\emph{Oblique shock:} 
This test case is also from Spekreijse \cite{Spe1987}. Domain of the problem is $[0,1]\times[0,1]$ and the flux functions are $g_1(u)=\frac{1}{2}u^2$ (nonlinear) and $g_2(u)=u$. Boundary conditions for the steady-state problem are $u(0,x_2)=1.5   \ \mbox{for} \   0<x_2<1$,  $u(1,x_2)=-0.5  \ \mbox{for} \  0<x_2<1$ and $u(x_1,0)=1.5-2x_1 \ \mbox{for} \  0<x_1<1$. Numerical solution computed by splitting the domain into $65 \times 65$ evenly spaced lattice points is shown in \cref{26}. 

\section{Extension of upwind LB scheme to conservation laws with source terms}
Development of numerical methods for conservation laws with source terms are nontrivial, especially when the source terms are stiff. In this section, the formulated upwind LB scheme will be extended to hyperbolic conservation laws with source terms. Consider the conservation law,
\begin{equation}
\label{Scalar cons. law source}
 \partial_t u + \sum_{d=1}^D \partial_{x_d} g_d (u) = s(u) \ \text{with IC} \ u(\mathbf{x},0)=u_0(\mathbf{x})
\end{equation}
Here $s(u)$ is the source term. The definitions of $u$ and $g_d (u)$ follow from the previous section. The problem in \cref{Scalar cons. law source} is solved approximately using the LBE,
\begin{multline}
\label{LBE source}
f_n(\mathbf{x}+\mathbf{v_n}\Delta t, t+\Delta t)=(1-\omega)f_n(\mathbf{x}, t) + \omega f_n^{eq} (u(\mathbf{x}, t)) + \\ \frac{1}{2}\biggl( r_n(u(\mathbf{x}+\mathbf{v_n}\Delta t, t+\Delta t)) + r_n(u(\mathbf{x}, t)) \biggr) \ \text{for} \ n \in \{1,2,..,N \}
\end{multline}
Here $r_n:\mathcal{U} \rightarrow \mathbb{R}$ models source term at the mesoscopic level. The definitions of $f_n, \ f_n^{eq} \ \text{and} \ \omega$ follow from the previous section. The LBE in \cref{LBE source} can be split up into,
\begin{align}
\mbox{Collision:} \quad \mathcal{F}_n^{*}=(1-\omega)f_n(\mathbf{x}, t) + \omega f_n^{eq} (u(\mathbf{x}, t)) + \frac{1}{2}r_n(u(\mathbf{x}, t))\label{collision source} \\ \mbox{Streaming:} \quad\quad \quad \quad \quad \quad \quad \quad \quad \quad \quad \mathcal{F}_n(\mathbf{x}+\mathbf{v_n}\Delta t, t+\Delta t)=\mathcal{F}_n^{*} \label{streaming source}
\end{align}
Here $\mathcal{F}_n=f_n-\frac{1}{2}r_n$. Taylor expanding the LBE in \cref{LBE source} and simplifying, we get
\begin{multline}
\label{TE2 source}
\left( \partial_t + \sum_{d=1}^D v_n^{(d)} \partial_{x_d}\right) f_n =  - \frac{1}{\epsilon} \left(f_n-f_n^{eq}\right) \\ + \frac{\Delta t}{2\epsilon} \left( \partial_t + \sum_{d=1}^D v_n^{(d)} \partial_{x_d}\right) (f_n-f_n^{eq}) + \frac{1}{\Delta t}r_n + O(\Delta t^2)
\end{multline}
Using multiple scale expansion of $r_n$ as $r_n = \xi r_n^{(1)}+\xi^2 r_n^{(2)}+...$ along with the multiple scale expansions of $f_n$ and derivatives of $f_n$ in \cref{TE2 source} and doing Chapman-Enskog analysis, the following modified PDE is obtained. 
\begin{multline}
\label{mPDE2 source}
\partial_t u + \sum_{d=1}^D \partial_{x_d} g_d(u) = \sum_{n=1}^N \frac{1}{\Delta t}r_n + \\  \Delta t \left( \frac{1}{\omega}-\frac{1}{2}\right) \sum_{d=1}^D \partial_{x_d} \left(  \sum_{i=1}^D \partial_{x_i} \left( \sum_{n=1}^N v_n^{(d)} v_n^{(i)} f_n^{eq} \right) - \partial_u g_d \left( \sum_{i=1}^D \partial_u g_i \partial_{x_i} u \right)\right) \\
+ \Delta t \left( \frac{1}{\omega}-\frac{1}{2}\right)  \sum_{d=1}^D  \partial_{x_d} \left( \partial_u g_d \sum_{n=1}^N \frac{1}{\Delta t} r_n^{(1)} - \sum_{n=1}^N \frac{1}{\Delta t} v_n^{(d)} r_n^{(1)}\right) + O(\Delta t^2)
\end{multline}
$r_n$ must be chosen such that the first term in RHS of \cref{mPDE2 source} is $s(u)$. The middle and last terms in RHS of \cref{mPDE2 source} represent numerical diffusion due to convection terms ($g_d(u)$) and spurious numerical convection due to source term ($s(u)$) respectively. Numerical diffusion due to convection terms dictates the choice of $\lambda$ and $\omega$ as discussed in the previous section. The spurious numerical convection due to source term can be nullified if $r_n$ is chosen such that $\partial_u g_d \sum_{n=1}^N \frac{1}{\Delta t} r_n^{(1)} = \sum_{n=1}^N \frac{1}{\Delta t} v_n^{(d)} r_n^{(1)}, \ \forall d$. For this, we require $r_n$ to be, 
 \begin{align}
r_n(u)&= \Delta t \frac{s(u) \partial_u g_n(u)^+}{\lambda_d} \ ,\quad\text{if} \ n \in \{1,2,...,D \}  \label{source distribution functions_a} \\ r_{D+1}(u)&=\Delta t s(u) \left( 1-\left(\sum_{d=1}^D \frac{\partial_u g_d(u)^+ + \partial_u g_d(u)^-}{\lambda_d}\right)\right)  \label{source distribution functions_b} \\ r_n(u)&=\Delta t \frac{s(u) \partial_u g_{n-(D+1)}(u)^-}{\lambda_{n-(D+1)}} \ ,\quad\text{if} \ n \in \{D+2,...,2D+1 \}  \label{source distribution functions_c}
  \end{align}
  These satisfy the moments $\sum_{n=1}^N \frac{1}{\Delta t}r_n = s(u)$ and $\partial_u g_d \sum_{n=1}^N \frac{1}{\Delta t} r_n =$ $\sum_{n=1}^N \frac{1}{\Delta t} v_n^{(d)} r_n$ $\forall d$, and hence the spurious numerical convection due to source term has been removed. \Cref{alg:LB source} can be followed to solve \cref{LBE source}.
\begin{algorithm}[h!] 
\caption{LB algorithm for conservation laws with source terms}
\label{alg:LB source}
\begin{algorithmic}[1]
\STATE{Evaluate $f_n^{eq}(\mathbf{x},0)$ and $r_n(\mathbf{x},0)$ from $u_0(\mathbf{x})$ using \cref{Flux decomposed equilibrium distribution functions_a,Flux decomposed equilibrium distribution functions_b,Flux decomposed equilibrium distribution functions_c} and \cref{source distribution functions_a,source distribution functions_b,source distribution functions_c} respectively, for $n \in \{1,2,..,N\}$ and $\forall \mathbf{x}$ in the lattice.}
\STATE{Initialise $f_n(\mathbf{x},0)=f_n^{eq}(\mathbf{x},0)$ for $n \in \{1,2,..,N\}$ and $\forall \mathbf{x}$ in the lattice, and take $t=0$.}
\WHILE{$T-t>10^{-8}$}
\STATE{Carry out Collision step using \cref{collision source} for an appropriate choice of $\omega$.}
\STATE{Carry out Streaming step using \cref{streaming source} for appropriate choice of discrete velocities $\mathbf{v_n}$ such that \cref{Discrete velocities_a,Discrete velocities_b,Discrete velocities_c} hold. For fixed $\lambda_d=\lambda>0$ where $d \in \{ 1,2,..,D\}$ and uniform lattice spacing, $\Delta t = \frac{\Delta x_d}{\lambda_d}=\frac{\Delta x}{\lambda}$ is determined uniquely for any $d$. }
\STATE{Evaluate $\sum_{n=1}^N \mathcal{F}_n$ at all interior points and use Newton's root finding method to find $u$ from $\sum_{n=1}^N\mathcal{F}_n=u-\frac{\Delta t}{2}s(u)$.}
\STATE{Evaluate $f_n^{eq}(u), \ g_d(u), \ s(u) \ \text{and} \ r_n(u)$ $\forall n \in \{ 1,2,..,N\}$ at all interior points.}\STATE{Evaluate $f_n$ using $f_n=\mathcal{F}_n-\frac{1}{2}r_n \ \forall n \in \{ 1,2,..,N\}$ at all interior points.}
\STATE{Use appropriate boundary conditions for $f_n$.}
\STATE{Update $t=t+\Delta t$}
\ENDWHILE
\RETURN $u$
\end{algorithmic}
\end{algorithm} 

\subsection{Numerical results and discussions}
Numerical results for some standard test problems with source terms are discussed in this section. 
\subsubsection{LeVeque and Yee's problem}
This is the test case used by LeVeque and Yee \cite{LY1990} to understand the cause for incorrectness in speeds of discontinuities. Governing equation is $\partial_t u +\partial_x u = -\mu u (u-1) (u-\frac{1}{2})$ in the domain $0\leq x \leq 1$. Initial conditions are $u(x,0)=1 \ \text{for} \ x\leq0.3$ and $u(x,0)=0 \ \text{for} \ x>0.3$. To obtain the numerical solution, domain is split up into 51 evenly spaced points. A comparison of numerical solutions reproduced from LeVeque and Yee \cite{LY1990} and numerical solutions from LB scheme is shown in \cref{S} at $T=0.3$ for different values of $\mu$. It is evident that the formulated upwind LB scheme is devoid of the effects of spurious numerical convection due to source terms, thereby capturing the discontinuities at correct locations, despite high stiffness in source terms. 
\begin{figure}[h!]
\centering
\begin{subfigure}[b]{0.4\textwidth}
\centering
\includegraphics[width=\textwidth]{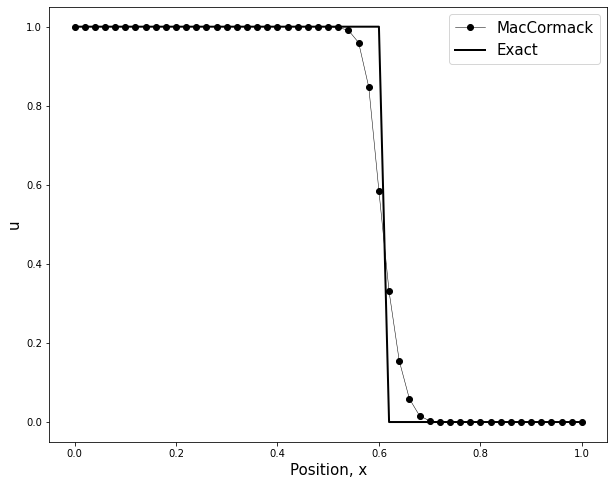}
\caption{ $\mu = 1$}
\end{subfigure}
\hfill
\begin{subfigure}[b]{0.4\textwidth}
\centering
\includegraphics[width=\textwidth]{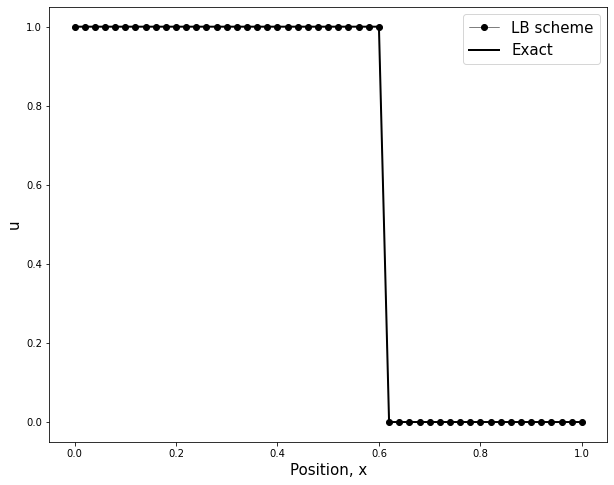}
\caption{ $\mu = 1$}
\end{subfigure}
\vfill 
\begin{subfigure}[b]{0.4\textwidth}
\centering
\includegraphics[width=\textwidth]{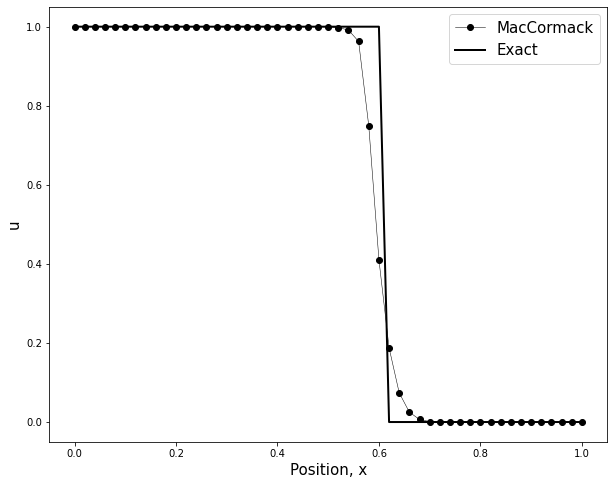}
\caption{ $\mu=10$}
\end{subfigure}
\hfill
\begin{subfigure}[b]{0.4\textwidth}
\centering
\includegraphics[width=\textwidth]{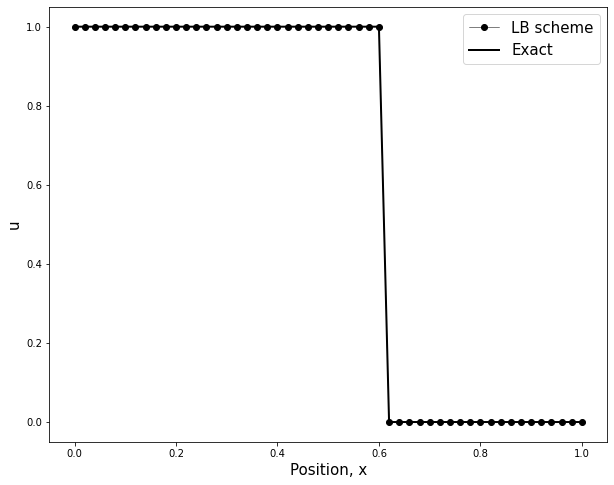}
\caption{ $\mu=10$}
\end{subfigure}
\vfill
\begin{subfigure}[b]{0.4\textwidth}
\centering
\includegraphics[width=\textwidth]{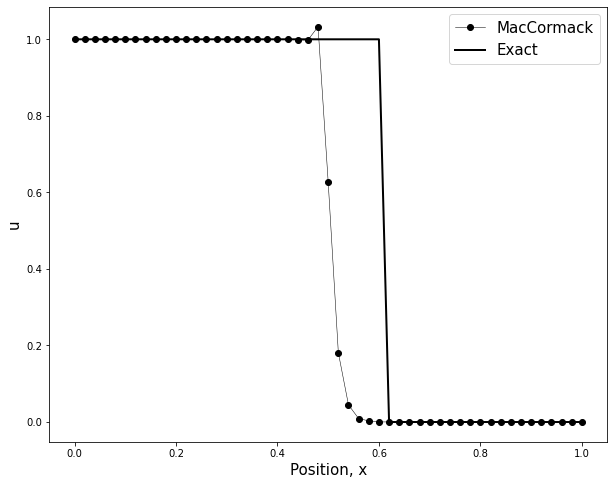}
\caption{ $\mu=100$}
\end{subfigure}
\hfill
\begin{subfigure}[b]{0.4\textwidth}
\centering
\includegraphics[width=\textwidth]{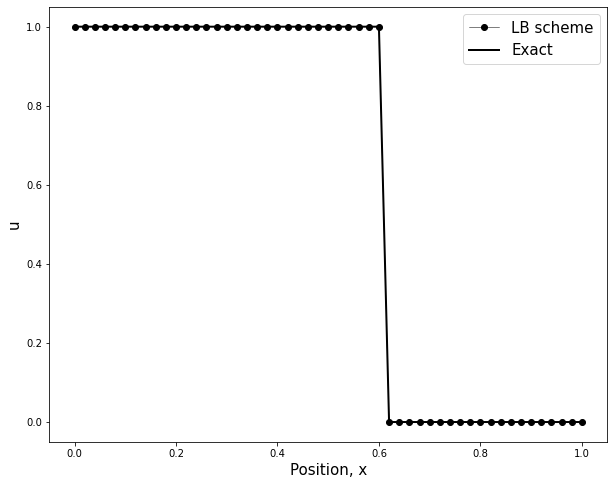}
\caption{ $\mu=100$}
\end{subfigure}
\vfill
\begin{subfigure}[b]{0.4\textwidth}
\centering
\includegraphics[width=\textwidth]{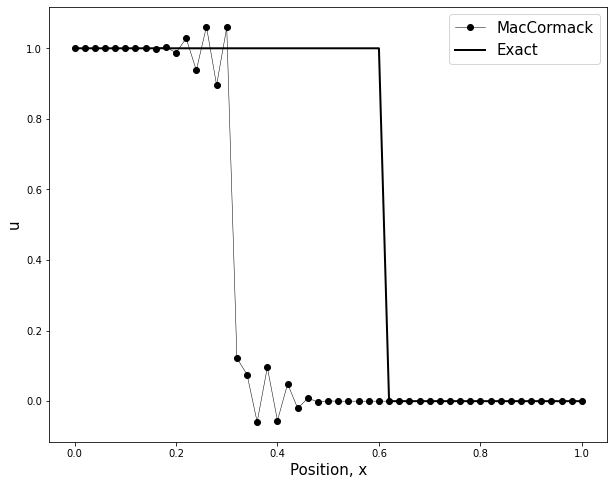}
\caption{ $\mu=1000$}
\end{subfigure}
\hfill
\begin{subfigure}[b]{0.4\textwidth}
\centering
\includegraphics[width=\textwidth]{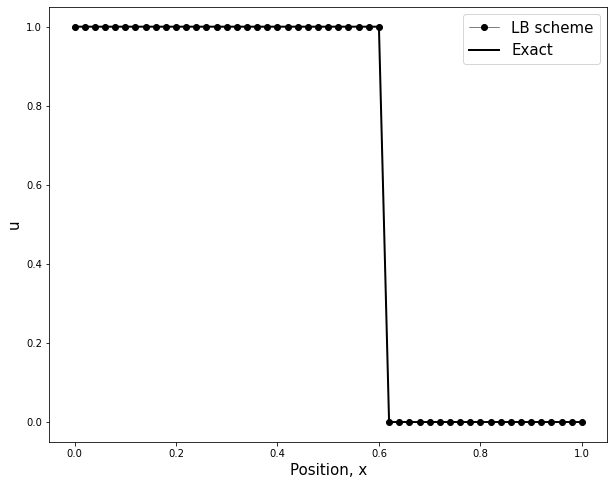}
\caption{ $\mu=1000$}
\end{subfigure}
\caption{ Left: Extended MacCormack's method with limiter based on $u^n$(Reproduced from \cite{LY1990}), Right: Formulated LB scheme for conservation laws with source terms}
\label{S}
\end{figure}

\subsubsection{Embid problem}
This test case is from Embid, Goodman and Majda \cite{EGM1984}. Governing equation is the non-linear scalar conservation law $\partial_t u + u \partial_x u = (6x-3)u$ in the domain $0\leq x\leq1$. Initial conditions are $u(x,0)=1 \ \text{for} \ x\leq0.18$ and $u(x,0)=-0.1 \ \text{for} \ x>0.18$. Boundary conditions are $u(0,t)=1 \ \text{and} \ u(1,t)=-0.1$. For numerical solution of this steady-state problem using LB scheme, the domain $[0,1]$ is split up into 41 evenly spaced lattice points. Numerical solution obtained using LB scheme plotted against the exact solution is shown in \cref{fig:S1}. 
\begin{figure}[h!]
\centering
\includegraphics[scale=0.4]{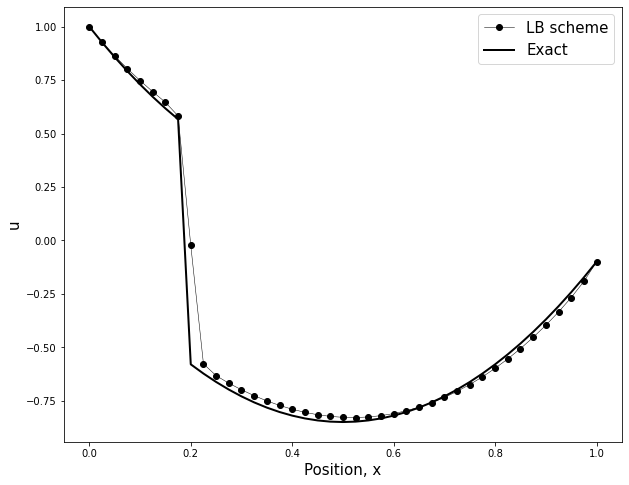}
\caption{Embid problem}
\label{fig:S1}
\end{figure} 

\section{Conclusions}
The LB scheme formulated using flux decomposed equilibrium distribution functions is equivalent to Engquist-Osher scheme upto second order in time. Hence, this LB scheme corresponds to an upwind scheme at the macroscopic level. In the extension of LB scheme to scalar conservation laws with source terms, the functions that model source term at the mesoscopic level are in the same form as flux decomposed equilibrium distribution functions, and they remove the spurious numerical convection occurring due to source terms. This is evident from the comparison of numerical results from Extended MacCormack's method and formulated LB scheme for LeVeque and Yee \cite{LY1990}'s test problem, where despite the stiff source terms, the discontinuities are captured at correct locations.

\bibliographystyle{siam}
\bibliography{references}  

\end{document}